\newcommand{\C}{\mathbb C}
\newcommand{\R}{\mathbb R}
\newcommand{\transp}{\,^t}
\DeclareMathOperator{\spanc}{span}
\newtheorem{theo}{Theorem}[section]
\newtheorem{lemma}[theo]{Lemma}
\newtheorem{cor}[theo]{Corollary}
\newtheorem{prop}[theo]{Proposition}
\theoremstyle{remark}
\newtheorem{remark}[theo]{Remark}
\theoremstyle{example}
\theoremstyle{definition}
\newtheorem{defi}[theo]{Definition}
\numberwithin{equation}{section}
\begin{document}

\begin{abstract}
We give an explicit construction of a key  family of  stationary discs attached to a nondegenerate  model quadric in  $\C^N$ and derive a necessary condition for which (each lift) of those  stationary discs  is uniquely determined by its $1$-jet at a given point  via a local diffeomorphism. This  unique $1$-jet determination  is  a crucial step   to deduce $2$-jet determination for CR automorphisms of  generic real submanifolds in $\C^N$.

\end{abstract}

\author{Florian Bertrand  and Francine Meylan}
\title[Explicit construction of stationary discs
]{Explicit construction of stationary discs and its
consequences for nondegenerate quadrics
}

\subjclass[2010]{}

\keywords{}
\thanks{Research of the first  author was  supported by the Center for Advanced Mathematical Sciences and by an URB grant from the American University of Beirut.}

\maketitle 




\section{Introduction}

In the papers  \cite {be-bl-me, be-me, tu3}, the authors discuss the connection between the existence of stationary  discs for generic real submanifolds in $\C^N$ and the unique $2$-jet determination of their  CR automorphisms.  
Their approach is based on a method developed by the first author and Blanc-Centi  in \cite{be-bl} which relies on the family of stationary discs introduced by Lempert \cite{le}. These invariant discs and their use in mapping problems have attracted the attention of several authors 
(see for instance \cite{hu, pa, tu, ba}). As emphasized in \cite{be-bl} (see also \cite{be,tu3}), the stationary disc method is well adapted to study jet 
determination problems, and is, to our knowledge,   the  only approach in the literature which allows the treatment of  (finitely) smooth CR automorphisms of  finitely smooth submanifolds.  Note that, under appropriate nondegeneracy conditions,  if the submanifold is {\it real-analytic} then  every $C^1$ CR automorphism is real-analytic (Theorem 2 in \cite{we}, Theorem 3.1 in \cite {BJT}). In general, the unique $2$-jet determination of CR automorphisms of real-analytic Levi nondegenerate 
hypersurfaces goes back to the works of  Cartan \cite{eca}, Tanaka \cite{ta}, and Chern and Moser  
\cite{ch-mo}. In the last twenty years, the question of jet determination for CR maps was pushed further in many important papers for real analytic submanifolds \cite{be, za, BER1, BMR, eb-la-za, la-mi, ju, ju-la, mi-za, la-mi2} and in the $ \mathcal{C}^\infty$ setting \cite{eb, eb-la, ki-za,KMZ1}. Note that the question of $2$-jet determination for real analytic submanifolds (of codimension $d>1$) is not well understood \cite{gr-me}.  

Given a model quadric $Q \subset \C^N$ and an initial stationary disc $f_0$, the key point of the stationary disc method is to construct "enough" discs, near $f_0$, attached to deformations of $Q$,  with "good" geometric properties. Such construction is usually done via an implicit 
function theorem for appropriate Banach spaces, and thus the choice of both the model quadric and the initial disc is crucial. While the construction of such discs only requires the submanifold to be strongly Levi nondegenerate (Definition \ref{defstrong}), it is necessary to impose more nondegeneracy restrictions on 
the submanifold to ensure that the family of constructed discs enjoys good geometric properties. The papers \cite {be-me, tu3} actually suggest that this problem is related to the defect of the initial disc $f_0$. In order to ensure the existence of such a nondefective disc, 
the authors rely on 
the explicit expression of - some - stationary discs attached to the given quadric. Tumanov provided in \cite{tu} their full description in case the quadric is strongly pseudoconvex with generating Levi form.  In the present paper, we provide a similar description when the model 
quadric is merely strongly Levi nondegenerate (Theorem \ref{vic0}). Using this explicit family of discs, we then study conditions that ensure the unique $1$-jet determination of their lift, and that their centers fill an open set in $\C^N$, {\it two key properties  to deduce $2$-jet determination for CR automorphisms}. As expected, such conditions are directly related to the geometry of the model quadric. More precisely, we show that if the constructed family of (lift of) discs is uniquely determined by their $1$-jet at $\zeta=1$, via a local diffeomorphism, then one of such discs must be nondefective (Theorem \ref{espoir}). Moreover, using the constructed explicit family of discs, we provide a geometric context to some of the previous nondegeneracy conditions introduced earlier in \cite{be-bl-me,be-bl} (Theorem \ref{theogen} and Theorem \ref{theogen2}). 

\section{Preliminaries}
We denote by $\Delta$ the unit disc in $\C$ and by $\partial \Delta$ its boundary.

\subsection{Strongly Levi nondegenerate  generic submanifolds}
Let $M \subset \C^{n+d}$ be a  $\mathcal{C}^{4}$ generic real submanifold of real codimension $d\ge 1$ through $p=0$ given locally by 
\begin{equation}\label{eqred0}
\begin{cases}
 r_1=\Re e  w_1- \transp\bar z A_1 z+ O(3)=0\\
\ \ \ \ \vdots \\
r_d=\Re e  w_d - \transp\bar z A_d z+ O(3)=0
\end{cases}
\end{equation}
where $A_1,\hdots,A_d$ are Hermitian matrices of size $n$. In the remainder O(3), the variables $z$ and $\Im m w$ are respectively of weight one and two. We set $r:= (r_1,...,r_d)$.
 We associate to $M$ its model quadric $Q$ given by
\begin{equation}\label{eqred1}
\begin{cases}
\rho_1= \Re e  w_1 - \transp\bar z A_1 z=0\\
\ \ \ \ \vdots \\
\rho_d=\Re e  w_d - \transp\bar z A_d z=0
\end{cases}
\end{equation}
and we write $\rho:=(\rho_1,\ldots,\rho_d)$. The following notions of nondegeneracy, particularly the second one, are due to Tumanov \cite{tu}.  
\begin{defi}\label{defstrong} 
Let $M$ be  a real submanifold given by \eqref{eqred0}. 
\begin{enumerate}[i.]
\item We say that  $M$ is {\it strongly Levi nondegenerate  at $0$} if there exists 
$b \in \Bbb R^d$ such that the matrix $\sum_{j=1}^d b_jA_j$ is invertible.  
\item We say that  $M$ is {\it strongly pseudoconvex at $0$} of if there exists 
$b \in \Bbb R^d$ such that $\sum_{j=1}^d b_jA_j$ is positive definite.
\end{enumerate}
\end{defi}

We now recall the following types of nondegeneracy  introduced in \cite{be-bl-me,be-me}.

\begin{defi}\label{definondeg}
  Let $M$ be a strongly Levi nondegenerate (at $0$) real submanifold given by \eqref{eqred0}.  Let $b\in \R^d$ be such that $A:=\sum_{j=1}^d b_jA_j$ is invertible.  
\begin{enumerate}[i.]
\item We say that  $M$ is {\it $\mathfrak{D}$-nondegenerate} at 0 if there exists $V \in \C^{n}$ such that, if $D_0$ denotes the $n \times d$ matrix whose $j^{th}$ column is
 $A_jV$, then $\Re e (\transp \overline D_0 A^{-1}D_0)$ is invertible.
\item If in addition $\transp \overline D_0 A^{-1}D_0$ is  invertible, then we say that  $M$  is {\it fully nondegenerate} at $0$.
\end{enumerate}
\end{defi}

We will need the following factorization lemma which  generalizes a classical factorization theorem due to Lempert (Theorem B p. 442 in \cite{le}) for positive definite matrix.
\begin{lemma}\label{lemfact}
   Let $M$ be a 
   strongly Levi nondegenerate (at $0$) real submanifold given by \eqref{eqred0}. Let $b\in \R^d$ be such that $A:=\sum_{j=1}^d b_jA_j$  and let $a \in \C^d$. Consider the  quadratic matrix equation 
\begin{equation}\label{confi0}
PX^2 + AX +  \transp{\overline {P}}=0
\end{equation} 
where $P:=\sum_{j=1}^d {{a}_j} A_j $ and   
 $X$ is a $n\times n$ matrix. Then for $a$ sufficiently small, we have 
 \begin{equation*}
 \sum_{j=1}^d({a_j} {\overline{\zeta}} + b_j+\overline{a_j}{\zeta})A_j=({I-\overline {\zeta}\  \transp{\overline {X}}})B({I- {\zeta}  { {X}}}), \ \ \zeta \in \partial \Delta. 
\end{equation*}
where  $X$ is the unique $n\times n$ matrix solution of \eqref{confi0} such that $\|X\|<1$  and where  $B$ is an invertible Hermitian matrix of size $n$.

 \end{lemma}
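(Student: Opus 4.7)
The plan is to match Laurent coefficients of $\zeta$ in the proposed factorization. For $\zeta \in \partial\Delta$ we have $\bar\zeta\zeta = 1$, so expanding $(I - \bar\zeta \overline{X}^t)B(I - \zeta X)$ produces a constant term $B + \overline{X}^t BX$, a $\zeta$-term $-BX$, and a $\bar\zeta$-term $-\overline{X}^t B$. Matching these against $\bar\zeta P + A + \zeta\overline{P}^t$ reduces the factorization on $\partial\Delta$ to the three algebraic conditions
\begin{equation*}
BX = -\overline{P}^t,\qquad \overline{X}^t B = -P,\qquad A = B + \overline{X}^t BX.
\end{equation*}
When $B$ is Hermitian, the second condition is the conjugate transpose of the first, and substituting $X = -B^{-1}\overline{P}^t$ into the third collapses the whole system to the single matrix equation
\begin{equation*}
B + PB^{-1}\overline{P}^t = A.
\end{equation*}

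The main step is to solve this last equation uniquely near $B = A$ and then to deduce Hermiticity by a symmetry argument. Define $G(B,a) := B + P(a)B^{-1}\overline{P(a)}^t - A$; then $G(A,0) = 0$ and $\partial_B G(A,0) = \mathrm{Id}$, since the nonlinear term vanishes to second order in $a$. The implicit function theorem produces a unique smooth solution $B = B(a)$ close to $A$ for $a$ small, in particular invertible. Crucially, using that $A$ and each $A_j$ is Hermitian, one verifies the symmetry $\overline{G(B,a)}^t = G(\overline{B}^t, a)$. Hence $\overline{B(a)}^t$ is also a small solution of the same equation, and uniqueness forces $\overline{B}^t = B$.

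Setting $X := -B^{-1}\overline{P}^t$, which is small in norm for $a$ small (so $\|X\| < 1$), the identity $B + PX = A$ together with $BX = -\overline{P}^t$ recovers the quadratic $PX^2 + AX + \overline{P}^t = 0$. Uniqueness of this $X$ under $\|X\| < 1$ follows from a parallel application of the implicit function theorem to $H(X,a) := PX^2 + AX + \overline{P}^t$ at $(0,0)$, together with the a priori bound $\|X\| \leq \|A^{-1}\|\bigl(\|P\|\,\|X\|^2 + \|\overline{P}^t\|\bigr)$, which for $a$ small forces any $\|X\|<1$ solution into the IFT neighborhood. The factorization itself then follows by re-inserting the three algebraic relations into the expansion of $(I - \bar\zeta \overline{X}^t) B (I - \zeta X)$ on $\partial\Delta$. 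The principal obstacle is the Hermiticity of $B$: a direct conjugate-transpose manipulation of the $X$-quadratic is circular, reducing only to the tautology $BX = -\overline{P}^t$. The productive move is to pivot from the $X$-quadratic to the scalar equation for $B$ alone, where the conjugation symmetry and local uniqueness become transparent.
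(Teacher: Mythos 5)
Your proof is correct, and its central step is genuinely different from the paper's. Both arguments begin with the same three coefficient identities $BX=-\transp{\overline P}$, $\transp{\overline X}B=-P$, $A=B+\transp{\overline X}BX$, but the paper takes the small-norm solution $X$ of \eqref{confi0} as given (following Tumanov), defines $B:=A+PX$, and proves Hermiticity directly: combining $A=B+\transp{\overline X}\transp{\overline B}X$ with its own conjugate transpose gives $B-\transp{\overline B}=\transp{\overline X}(B-\transp{\overline B})X$, which iterated with $\|X\|<1$ forces $B=\transp{\overline B}$. So the ``direct conjugate-transpose manipulation'' you dismiss as circular in fact succeeds once one transposes the right identity. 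Your pivot to the scalar equation $B+PB^{-1}\transp{\overline P}=A$ is nonetheless a clean alternative: the symmetry $\transp{\overline{G(B,a)}}=G(\transp{\overline B},a)$ together with IFT-uniqueness yields Hermiticity for free, and your argument has the added value of actually establishing existence and uniqueness of the $\|X\|<1$ solution of \eqref{confi0} (via the a priori bound pushing any such solution into the IFT neighborhood), a point the paper assumes without proof. One sign slip to fix in your last paragraph: from $B+PB^{-1}\transp{\overline P}=A$ and $X=-B^{-1}\transp{\overline P}$ one gets $B-PX=A$, i.e.\ $B=A+PX$ (not $B+PX=A$); it is this version that, combined with $BX=-\transp{\overline P}$, yields $PX^2+AX+\transp{\overline P}=0$.
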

 \begin{proof} We consider $a$ small enough and $X$ the unique matrix solution of \eqref{confi0} with $\|X\|<1$. Following Tumanov \cite{tu}, we define the invertible  $n\times n$ matrix
 \begin{equation}\label{confi1}
 B:= A+ PX.
 \end{equation}
 Note that using \eqref{confi0}, we obtain directly 
 \begin{equation}\label{confi2}
 \begin{cases}
\displaystyle  BX=-\transp{\overline {P}}
 \\
\displaystyle A= B+ \transp{\overline{X} \transp{\overline{B}}}X.
 \end{cases}
 \end{equation}
 We claim that $B$ is hermitian. Indeed, due to  \eqref{confi1} and \eqref{confi2} we have 
  \begin{equation*}
  B-\transp{\overline{B}}= \transp{\overline{X}}(B-\transp{\overline{B}})X.
  \end{equation*}
  Therefore, for any positive integer $k$
  \begin{equation*}
   B-\transp{\overline{B}}= {\transp{\overline{X}}}^k(B-\transp{\overline{B}}){X}^k,
  \end{equation*}
 which implies $B=\transp{\overline{B}}$ since  $\|X\|<1$. The claim is proved and the factorization follows directly from \eqref{confi2}.
 \end{proof}
 Without loss of generality,  we assume that $\|X\|<1$ for the rest of the paper.
 Inspired by the work of Tumanov in the strongly pseudoconvex case \cite{tu}, we now introduce the following definition.

 \begin{defi}\label{defstatmin}
 Let $M$ be a 
   strongly Levi nondegenerate (at $0$) real submanifold given by \eqref{eqred0}. Let $b\in \R^d$ be such that $\sum_{j=1}^d b_jA_j$ is invertible and let  $V\in \C^n$. Consider $a \in \C^d$ sufficiently small and the  solution $X$  of \eqref{confi0} with $\|X\|<1$. We say that $M$ is {\it stationary minimal at $0$ for $(a,b,V)$} if the matrices 
 $A_1,\ldots,A_d$ restricted to the orbit space
 $$\mathcal{O}_{X, V}:={\rm span}_\R \{V, XV, {X^2}V, \ldots, {X^k}V, \ldots\}$$ are 
 $\R$-linearly independent. 
 \end{defi}
 
Note that since $X=0$ when $a=0$, $M$ is   stationary minimal at $0$ for $(0,b,V)$ if and only if the space 
${\rm span}_\R \{A_1V,\ldots,A_dV\}$ is of real dimension $d$. It follows that if  $M$ is $\mathfrak{D}$-nondegenerate then $M$ is stationary minimal for $(0,b,V)$. The converse holds in case $M$ is strongly pseudoconvex; this point was in fact  already observed by the authors in \cite{be-me}.  
It is important to point out that the above definition is independent of the choice of holomorphic coordinates. Also, if $M$ is stationary minimal at $0$ then $M$ is of finite type at $0$ with $2$ the only H\"ormander number, that is, 
 the matrices $A_1,\ldots,A_d$ are linearly independent. The converse is true in case $M$ is strongly pseudoconvex \cite{tu3}. 
 
 \begin{remark}\label{remtu}
 Tumanov  observed in \cite{tu} (see the proof of Lemma 6.7 \cite{tu}) that the spaces 
 $\mathcal{O}_{X, V}$ and  $\mathcal{O}_{X, (I-X)V}$ coincide.  In particular, the submanifold $M$ is stationary minimal at $0$ for $(a,b,V)$ if and only if it is stationary minimal at $0$ for $(a,b,(I-X)^kV)$ for any integer $k$.  
 \end{remark}
We now state  
  \begin{lemma}\label{lemstat}
   Let $M$ be a strongly Levi nondegenerate submanifold given by \eqref{eqred0}. Let $b\in \R^d$ be such that $\sum_{j=1}^d b_jA_j$ is invertible and let  $V\in \C^n$. Consider $a \in \C^d$ sufficiently small and the  solution $X$  of \eqref{confi0} with $\|X\|<1$. Then the 
  following statements are equivalent:
  \begin{enumerate}[i.]
\item The submanifold $M$ is stationary minimal at $0$ for $(a,b,V)$.
\item Assume that $\lambda_1,\ldots,\lambda_d \in \R$ are such that  $\sum_{j=1}^d\lambda_jA_jX^rV=0$ for all $r=0,1,2,\ldots$,   then $\lambda_1=\dots=\lambda_d=0$.  
\item The matrix $\Re e \left(\sum_{r=0}^{\infty} \transp \overline V{\transp \overline {X}}^rA_jA_sX^r V\right)_{j,s}$ is positive definite. 
  \end{enumerate}  
  \end{lemma}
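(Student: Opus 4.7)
The plan is to establish (i)$\,\Leftrightarrow\,$(ii)$\,\Leftrightarrow\,$(iii) through a short linear-algebraic computation that exploits the Hermiticity of the $A_j$'s and the contraction $\|X\|<1$.

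For (i)$\,\Leftrightarrow\,$(ii), I would simply unpack the definition of stationary minimality. Since $\mathcal{O}_{X,V}$ is by definition the $\R$-span of the vectors $\{X^rV\}_{r\ge 0}$, the $\R$-linear independence of the restricted maps $A_1|_{\mathcal{O}_{X,V}},\ldots,A_d|_{\mathcal{O}_{X,V}}$ amounts, by $\R$-linearity, to the statement that the vanishing of $\sum_{j=1}^d\lambda_jA_jX^rV$ for every $r\ge 0$ forces $\lambda_1=\cdots=\lambda_d=0$. That is exactly (ii).

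The core of the argument is (ii)$\,\Leftrightarrow\,$(iii). Given $\lambda=(\lambda_1,\ldots,\lambda_d)\in\R^d$, set $L:=\sum_{j=1}^d\lambda_jA_j$; since each $A_j$ is Hermitian and each $\lambda_j$ is real, $L$ is Hermitian, so $L^2=L^\ast L$ is positive semidefinite. Let $G$ denote the real matrix in (iii). A direct expansion gives
\begin{equation*}
\lambda^T G\lambda \;=\; \Re e\sum_{r=0}^\infty \transp{\overline V}\,{\transp{\overline X}}^r L^2 X^r V \;=\; \sum_{r=0}^\infty \vnorm{LX^rV}^2,
\end{equation*}
where the second equality uses $\transp{\overline V}\,{\transp{\overline X}}^r L^2 X^rV=(LX^rV)^\ast(LX^rV)=\vnorm{LX^rV}^2$, which is a nonnegative real, so the outer $\Re e$ is automatic. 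Absolute convergence is guaranteed by the bound $\vnorm{LX^rV}\le \|L\|\,\|X\|^r\,\vnorm{V}$ together with $\|X\|<1$. This identity immediately shows that $G$ is positive semidefinite, and that $\lambda^T G\lambda>0$ for every nonzero $\lambda$ if and only if, for every nonzero $\lambda$, at least one of the vectors $LX^rV$ is nonzero. The latter is precisely the contrapositive of (ii), giving (ii)$\,\Leftrightarrow\,$(iii).

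Chaining the two equivalences yields the lemma. I do not anticipate a genuine obstacle: the Hermiticity of the $A_j$'s supplies the perfect-square structure $L^2=L^\ast L$ that forces nonnegativity of the quadratic form, and the assumption $\|X\|<1$ takes care of convergence of the series.
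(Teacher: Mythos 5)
Your proof is correct and follows essentially the same route as the paper: (i)$\Leftrightarrow$(ii) by unwinding the definition of the orbit space, and (ii)$\Leftrightarrow$(iii) by identifying the quadratic form of the matrix in (iii) with $\sum_{r\ge 0}\vnorm{\sum_j\lambda_jA_jX^rV}^2$, using Hermiticity of the $A_j$ and $\|X\|<1$ for convergence. The paper phrases the same computation with complex test vectors $W$ and the matrices $D_r$, writing the form as $\sum_r\|D_rW\|^2+\sum_r\|\overline{D_r}W\|^2$, but since the matrix in (iii) is real symmetric this is equivalent to your real-vector version.
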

 \begin{proof}
 The equivalence between the first two statements follows directly from Definition \ref{defstatmin}. We will only prove the equivalence between the last two statements. 
 
Suppose that $\it{ii}.$ is satisfied. Let $W=(w_1,\ldots,w_d) \in \C^d$. We have  
\begin{equation}\label{eqdefpos}
\transp \overline{W}\Re e \left(\sum_{r=0}^{\infty} \transp \overline V{\transp \overline {X}}^rA_jA_sX^r V\right)_{j,s}W = \sum_{r=0}^\infty \|D_r W\|^2 + \sum_{r=0}^\infty \|\overline{D_r} W\|^2\geq 0
\end{equation}
where $D_r$ is the $n \times d$ matrix whose $s^{th}$ column is $A_sX^r V$.  If $D_r  W= \overline{D_r} W= 0$ for all nonnegative integer $r$ then     
$$\sum_{j=1}^d \Re e (w_j)A_jX^r V=\sum_{j=1}^d \Im m (w_j)A_jX^r V=0,$$
which implies that $W=0$ by $\it{ii}.$.

Assume now that the matrix $\Re e \left(\sum_{r=0}^{\infty} \transp \overline V{\transp \overline {X}}^rA_jA_sX^r V\right)_{j,s}$ is positive definite. Let $W\in \R^d$  be such that $D_rW=0$ for all nonnegative integer $r$.
According to \eqref{eqdefpos}, it follows that $\transp \overline{W}\Re e \left(\sum_{r=0}^{\infty} \transp \overline V{\transp \overline {X}}^rA_jA_sX^r V\right)_{j,s}W$ and thus $W=0$.  
 \end{proof}
 
 The next lemma shows that the notion of stationary minimality is open.
  \begin{lemma}\label{clan}
  Let $M$ be a strongly Levi nondegenerate submanifold given by \eqref{eqred0}.  Assume that $M$ is stationary minimal at $0$ for some  $(a_0,b_0,V)$. Then $M$ is stationary minimal at $0$ for $(a,b,V)$
 for $(a,b)$ sufficiently close to $(a_0,b_0)$.   
  \end{lemma}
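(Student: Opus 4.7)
The plan is to invoke the equivalence $(i)\Leftrightarrow(iii)$ of Lemma~\ref{lemstat} to translate stationary minimality into positive definiteness of the Hermitian $d\times d$ matrix
$$\Theta(a,b):=\Re e\left(\sum_{r=0}^{\infty}\transp \overline{V}\,{\transp \overline{X(a,b)}}^{r}A_{j}A_{s}X(a,b)^{r}V\right)_{j,s},$$
where $X(a,b)$ denotes the unique contractive solution of \eqref{confi0}. Since positive definiteness is an open condition on Hermitian $d\times d$ matrices, it is enough to show that $\Theta$ depends continuously on $(a,b)$ near $(a_0,b_0)$.

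The key input is continuity of the assignment $(a,b)\mapsto X(a,b)$ near $(a_0,b_0)$. Letting $F(X,a,b):=PX^{2}+AX+\transp \overline{P}$ with $P=\sum_{j}a_{j}A_{j}$ and $A=\sum_{j}b_{j}A_{j}$, its partial Fr\'echet derivative in $X$ at $(X_{0},a_{0},b_{0})$, with $X_{0}:=X(a_{0},b_{0})$, is the linear map $H\mapsto B_{0}H+P_{0}HX_{0}$ on $n\times n$ complex matrices, where $B_{0}=A_{0}+P_{0}X_{0}$ is invertible by Lemma~\ref{lemfact}. Since we work in the small-$a$ regime of that lemma, $P_{0}$ and $X_{0}$ are of the same small order, so $H\mapsto P_{0}HX_{0}$ is a small perturbation of the invertible map $H\mapsto B_{0}H$; this yields invertibility of the differential and, via the implicit function theorem, continuous (indeed smooth) dependence of $X(a,b)$ on $(a,b)$ on a neighborhood of $(a_{0},b_{0})$. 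As a backup, continuity can also be extracted by a compactness argument using the uniqueness of the contractive solution, with a small perturbation analysis around $X_{0}$ to rule out cluster points on the boundary of the unit ball.

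Once continuity of $X(a,b)$ is in hand, choose a neighborhood $U$ of $(a_{0},b_{0})$ and some $\eta<1$ with $\|X(a,b)\|\le\eta$ on $U$. The tail of the series defining $\Theta(a,b)$ is then dominated uniformly on $U$ by a convergent geometric series in $\eta$, so $\Theta$ is the uniform limit on $U$ of continuous partial sums and is thus continuous on $U$. Combined with the standing assumption that $\Theta(a_{0},b_{0})$ is positive definite, this yields positive definiteness of $\Theta(a,b)$ on a possibly smaller neighborhood of $(a_{0},b_{0})$, which is the claim. The only real obstacle is the continuity of $X(a,b)$; everything else is a routine uniform-convergence argument.
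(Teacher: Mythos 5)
Your argument is correct, but it takes a genuinely different route from the paper's. The paper exploits the fact that the orbit space $\mathcal{O}_{X,V}$ is finite dimensional: it fixes $s\le 2n$ with $\mathcal{O}_{X(a_0,b_0),V}={\rm span}_\R\{V,X(a_0,b_0)V,\ldots,X(a_0,b_0)^{s}V\}$, stacks the finitely many vectors $A_jX(a,b)^rV$, $0\le r\le s$, into a single vector $\tilde A_j\tilde V(a,b)$, and concludes by openness of $\R$-linear independence of $d$ vectors depending continuously on $(a,b)$. No infinite series appears, and the implication for nearby $(a,b)$ is immediate because linear independence of the truncated family ($r\le s$) is a priori stronger than stationary minimality, while at the base point the two coincide since the orbit stabilizes at step $s$. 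You instead invoke the equivalence of statements {\it i.} and {\it iii.} of Lemma \ref{lemstat} and prove continuity of the Gram-type matrix $\Theta(a,b)$, which forces you to control the tail of the series uniformly via a bound $\|X(a,b)\|\le\eta<1$; this works and is a legitimate alternative, though it routes the proof through an equivalence whose own verification is more delicate than the bare definition the paper uses. A genuine merit of your write-up is that it makes explicit a point the paper leaves implicit and that both proofs rely on: the continuity (indeed smoothness) of $(a,b)\mapsto X(a,b)$. Your implicit-function-theorem computation, with differential $H\mapsto B_0H+P_0HX_0$ a small perturbation of the invertible map $H\mapsto B_0H$, is right, with the one small step worth spelling out being the identification of the IFT branch with the unique solution of \eqref{confi0} of norm less than one (which follows since the branch stays in the open unit ball near $X_0$ and that solution is unique there).
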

It is important to note that the solution matrix $X$ of \eqref{confi0} depends on $a\in \C^d$ and $b\in \R^d$ .
  \begin{proof}  
  Let  $s\leq 2n$ be such that  
  $$\mathcal{O}_{X(a_0,b_0), V}={\rm span}_\R \{V, X(a_0,b_0)V, {{X(a_0,b_0)}^2}V, \dots, {{X(a_0,b_0)}^{s}}V \}.$$
 Define the vector 
 $$\tilde{V}(a,b):=(V, X(a,b)V, {{X(a,b)}^2}V, \dots, {{X(a,b)}^s}V) \in \C^{sn}$$
and the following $sn \times sn$ matrix, $j=1\ldots,d$,
 $$
\tilde {A_j}:=\begin{pmatrix}
	A_j& & & (0) \\ &A_j & & \\ & & \ddots & \\ (0)& & &A_j
\end{pmatrix}.
$$   
 The vectors ${\tilde A_1}\tilde{V}(a_0,b_0), \dots,\tilde{A_d}\tilde{V}(a_0,b_0) $ 
 are $\R$-linearly independent.  
Hence the vectors ${\tilde A_1}\tilde{V}(a,b), \dots,\tilde{A_d}\tilde{V}(a,b)$ are  $\R$-linearly independent    for $(a,b)$ in a neighborhood of $(a_0,b_0)$
and the conclusion follows. 
\end{proof}

In the same vein, we note the following result interesting on its own.
\begin{lemma}\label{lemscal}
 Let $M$ be a strongly Levi nondegenerate submanifold given by \eqref{eqred0}.  Assume that $M$ is stationary minimal at $0$ for   $(a,b,V)$. Then $M$ is stationary minimal at $0$ for 
 $(\lambda a,\lambda b,V)$ for any $\lambda \in \R\setminus\{0\}$.   
  \end{lemma}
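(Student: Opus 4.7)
The plan is to exploit the $\R^*$-invariance of the quadratic matrix equation \eqref{confi0} under the scaling $(a,b)\mapsto(\lambda a,\lambda b)$. The entire statement will follow from the fact that this scaling leaves the solution matrix $X$ (and hence the orbit space $\mathcal{O}_{X,V}$) unchanged.

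Concretely, I would first compute how the relevant matrices transform. Writing $P(a):=\sum_{j=1}^d a_jA_j$ and $A(b):=\sum_{j=1}^d b_jA_j$, linearity of these sums in $a$ and $b$ yields $P(\lambda a)=\lambda P(a)$ and $A(\lambda b)=\lambda A(b)$. Since $\lambda\in\R$, we also have $\transp{\overline{P(\lambda a)}}=\overline{\lambda}\transp{\overline{P(a)}}=\lambda \transp{\overline{P(a)}}$. Substituting into \eqref{confi0} for the parameters $(\lambda a,\lambda b)$, the equation reads
\begin{equation*}
\lambda P(a)X^2+\lambda A(b)X+\lambda \transp{\overline{P(a)}}=\lambda\bigl(P(a)X^2+A(b)X+\transp{\overline{P(a)}}\bigr)=0.
\end{equation*}
Because $\lambda\neq 0$, this is equivalent to the original equation. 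Hence the matrix $X=X(a,b)$ produced by Lemma \ref{lemfact} is also a solution for the parameters $(\lambda a,\lambda b)$, and its norm $\|X\|<1$ is obviously preserved; by uniqueness, $X(\lambda a,\lambda b)=X(a,b)$.

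As a consequence, the orbit space $\mathcal{O}_{X(\lambda a,\lambda b),V}={\rm span}_\R\{X^kV:k\geq 0\}$ coincides with $\mathcal{O}_{X(a,b),V}$. The defining property in Definition \ref{defstatmin}, namely the $\R$-linear independence of $A_1,\ldots,A_d$ restricted to this orbit space, therefore transfers verbatim from $(a,b,V)$ to $(\lambda a,\lambda b,V)$.

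There is essentially no obstacle: the only subtle point is making sure that we do not need the smallness hypothesis "$\lambda a$ sufficiently small" required to invoke Lemma \ref{lemfact} directly (since $|\lambda|$ may be large). This is bypassed by the observation that we are not reapplying Lemma \ref{lemfact} but rather transporting the already constructed $X$ through the invariance of \eqref{confi0}; the smallness of $a$ in the original hypothesis suffices to ensure existence of $X$, and this same $X$ then serves for all real $\lambda\neq 0$.
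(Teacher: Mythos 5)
Your proof is correct and follows the same route as the paper, which simply invokes the homogeneity of equation \eqref{confi0} to conclude $X(\lambda a,\lambda b)=X(a,b)$ and hence the invariance of the orbit space. Your additional remark about not needing $\lambda a$ to be small is a worthwhile clarification that the paper leaves implicit.
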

  
 \begin{proof}
 This is due to the homogeneity of Equation \eqref{confi0} which implies that $X(\lambda a,\lambda b)=X(a,b)$.
 \end{proof}

We end this section with the following lemma whose proof is straightforward.
\begin{lemma}\label{lemX}
 Let $Q$ be a strongly Levi nondegenerate quadric given by \eqref{eqred1} and let $b_0\in \R^d$ be such that $\sum_{j=1}^d {b_0}_jA_j$ is invertible. Let $a \in \C^d$ be small enough and let  X be the unique $n\times n$ matrix solution of \eqref{confi0} (with $b=b_0-a-\overline{a}$) with  $\|X\|<1$.  
    Then for any $s=1,\ldots,d$, we have 
\begin{equation*}
\begin{cases}
 \dfrac{\partial X}{\partial {a_s}}(0)=0\\
 \\
 \displaystyle \dfrac{\partial X}{\partial  \overline a_s}(0)=\dfrac{\partial X}{\partial {\Re e a_s}}(0)=-\left(\sum_{k=1}^d {b_0}_kA_k \right)^{-1}A_s.
\end{cases}
\end{equation*}
\end{lemma}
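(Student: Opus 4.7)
The strategy is to implicitly differentiate Equation \eqref{confi0}, viewing $X=X(a,\bar a)$ as a smooth function of the Wirtinger variables $(a,\bar a)$ near the origin. First I would record that at $a=0$ we have $P=0$ and $A=\sum_k {b_0}_kA_k$, so the equation \eqref{confi0} reduces to $A_0 X=0$ with $A_0:=\sum_k {b_0}_kA_k$ invertible; hence the unique solution with $\|X\|<1$ is $X(0)=0$. The linearization of the left-hand side of \eqref{confi0} with respect to $X$ at $(X,a)=(0,0)$ is left multiplication by $A_0$, which is invertible, so the implicit function theorem guarantees that $X$ depends smoothly on $(a,\bar a)$ near the origin and justifies the formal manipulations below.

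Next I would rewrite \eqref{confi0} explicitly in terms of $(a,\bar a)$. Since each $A_j$ is Hermitian, $\transp{\overline P}=\sum_j \overline{a}_j A_j$, and with $b=b_0-a-\overline a$ we have $A=\sum_j({b_0}_j-a_j-\overline{a}_j)A_j$. Differentiating \eqref{confi0} with respect to $a_s$ and evaluating at $a=0$ (where $P=0$, $X=0$) all the quadratic and linear-in-$X$ terms drop out except $A_0 \partial_{a_s}X(0)$, and the pure $\transp{\overline P}$ term contributes $0$ since $\partial_{a_s}\transp{\overline P}=0$. Hence $A_0 \partial_{a_s}X(0)=0$, which gives $\partial_{a_s}X(0)=0$. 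Differentiating with respect to $\overline a_s$ and evaluating at $a=0$, again only $A_0 \partial_{\overline a_s}X(0)$ survives from the $X$-dependent terms, while $\partial_{\overline a_s}\transp{\overline P}=A_s$. We obtain $A_0\partial_{\overline a_s}X(0)+A_s=0$, whence $\partial_{\overline a_s}X(0)=-A_0^{-1}A_s$.

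Finally, for the real derivative I would use $\partial/\partial \Ree a_s=\partial/\partial a_s+\partial/\partial \overline a_s$, so
\begin{equation*}
\dfrac{\partial X}{\partial \Ree a_s}(0)=\dfrac{\partial X}{\partial a_s}(0)+\dfrac{\partial X}{\partial \overline a_s}(0)=-A_0^{-1}A_s,
\end{equation*}
matching the stated formula. There is no serious obstacle; the only point requiring a moment of care is correctly computing $\transp{\overline P}=\sum_j \overline a_j A_j$ using Hermiticity of the $A_j$'s, since this is what makes the $\overline a_s$-derivative nontrivial while the $a_s$-derivative vanishes.
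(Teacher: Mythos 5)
Your proof is correct, and since the paper omits the argument entirely (declaring it "straightforward"), your implicit differentiation of \eqref{confi0} in the Wirtinger variables $(a,\overline a)$ — using that $X(0)=0$, $\transp{\overline P}=\sum_j\overline a_jA_j$ by Hermiticity, and that the linearization $M\mapsto A_0M$ is invertible — is exactly the intended route. No gaps.
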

In what follows, we denote by $X_{\Re e a_s}$ the derivative $\dfrac{\partial X}{\partial {\Re e a_s}}$, $s=1,\ldots,d$.

\subsection{Stationary discs}

 Let $M$ be a $\mathcal{C}^{4}$ generic  real submanifold of $\C^N$ of codimension $d$ given by  (\ref{eqred0}). A holomorphic disc $f: \Delta \to \C^N$ continuous up to  $\partial \Delta$ is  {\it attached to a $M$} if $f(\partial \Delta) \subset M.$  
 The following definition is due to Lempert \cite{le} for hypersurfaces and to Tumanov \cite{tu} for  higher codimension submanifolds.
 \begin{defi}
A holomorphic disc $f: \Delta \to \C^N$ continuous up to  $\partial \Delta$ and attached to  $M$ is {\it stationary for $M$} if there 
exists a  holomorphic lift $\bm{f}=(f,\tilde{f})$ of $f$ to the cotangent bundle $T^*\C^{N}$, continuous up to 
 $\partial \Delta$ and such that for all $\zeta \in \partial\Delta,\ \bm{f}(\zeta)\in\mathcal{N}M(\zeta)$
where
\begin{equation*}
\mathcal{N}M(\zeta):=\{(z,w,\tilde{z},\tilde{w}) \in T^*\C^{N} \ | \ (z,w) \in M, (\tilde{z},\tilde{w}) \in 
\zeta N^*_{(z,w)} M\setminus \{0\} \},
\end{equation*}
and where 
$$N^*_{(z,w)} M=\spanc_{\R}\{\partial r_1(z,w), \ldots, \partial r_d(z,w)\}$$ is the conormal fiber at $(z,w)$ of $M$. 
The set of these lifts $\bm{f}=(f,\tilde{f})$, with $f$ nonconstant, is denoted by $\mathcal{S}(M)$.
\end{defi}
We note that a disc $f \in \mathcal{S}(M)$ if there exist $d$ real valued functions $c_1, \ldots, c_d : \partial \Delta \to \R$ such that $\sum_{j=1}^dc_j(\zeta)\partial r_j(0)\neq 0$ for all $\zeta \in \partial \Delta$   and such that the map 
\begin{equation*}
\zeta \mapsto \zeta \sum_{j=1}^dc_j(\zeta)\partial r_j\left(f(\zeta), \overline{f(\zeta)}\right)
\end{equation*}
defined on $\partial \Delta$ extends holomorphically on $\Delta$.

\section{Explicit construction of stationary discs for quadric submanifolds}

\subsection{Explicit construction of stationary discs}
Let $Q\subset \C^N$ be  a quadric submanifold of real codimension $d$  given by  (\ref{eqred1}). 
In the  recent papers \cite{be-bl-me} with Blanc-Centi, and \cite{be-me}, we worked with a special family of lifts $\bm {f}=(h,g,\tilde{h},\tilde{g}) \in \mathcal{S}(Q)$ of the form
 \begin{equation}\label{eqinit}
 \bm {f}=\left((1-\zeta)V,2(1-\zeta)\transp {\overline{V}}A_1V,\ldots,2(1-\zeta)\transp \overline{V}A_dV,(1-\zeta) \transp { \overline{V}}\sum_{j=1}^b {b}_jA_j, \frac{\zeta}{2} b\right),
 \end{equation} 
 where $V\in \C^n$ and   $b\in \R^d$ is such that $\sum_{j=1}^d b_jA_j$ is invertible. This special family of lift can been used  
 to obtain unique jet determination properties for $\mathfrak{D}$-nondegenerate submanifolds. Nevertheless, this class of submanifolds is the largest one can treat by working with discs of the form \eqref{eqinit}.  
 Therefore, in order to study jet determination problems for  larger classes of strongly Levi nondegenerate submanifolds, it is crucial to work with more (explicit) stationary discs. This is precisely the purpose of Theorem  \ref{vic0} in which we describe explicitly stationary discs attached to $Q$. 

Before stating the main theorem, we need to introduce the following. Let $b\in \R^d, a \in \C^d$, $P$ and $X$ be as  Lemma \ref{lemfact}. Denote by  $\mathcal{M}_n(\C)$ 
the space of square matrices of size $n$ with complex coefficients. Consider the  linear map $\psi: \mathcal{M}_n(\C) \to \mathcal{M}_n(\C)$ defined by 
\begin{equation}\label{eqPsi}
\psi(M)=M- {\transp \overline {X}} M X.\\
\end{equation}
Due to the fact that $a$ is small and $\|X\|<1$, the map $\psi$ is invertible with inverse
$$\psi^{-1}(M)=\sum_{r=0}^\infty {\transp \overline {X}}^r M X^r.$$
Note that $M$ is Hermitian if and only if $\psi(M)$ is Hermitian.   

In Theorem \ref{vic0}, we focus on lifts of discs attached to a fixed point in the cotangent bundle. We fix $b_0 \in  \Bbb R^d$  such that $\sum_{j=1}^d {b_0}_jA_j$ is invertible and
 we define $\mathcal{S}_0(Q) \subset \mathcal{S}(Q)$ to be the subset of lifts whose value at $\zeta=1$ is $(0,0,0,b_0/2)$.  
 Consider an initial  disc $\bm {f_0} \in \mathcal{S}_0(Q)$ given by
 \begin{equation*}
 \bm {f_0}=\left((1-\zeta)V_0,2(1-\zeta)^t \overline{V_0}A_1V_0,\ldots,2(1-\zeta)^t \overline{V}A_dV_0,(1-\zeta) ^t \overline{V_0}(\sum {b_0}_jA_j), \frac{\zeta}{2} b_0\right),
 \end{equation*} 
with $V_0 \in\C^n$. We then obtain the following explicit expression for lifts of stationary discs near $\bm {f_0}$.

\begin{theo}\label{vic0}
Let $Q$ be a strongly Levi nondegenerate quadric given by \eqref{eqred1}. Then stationary discs $f=(h,g)$ with lifts $\bm{f}=(h,g,\tilde{h},\tilde{g}) \in \mathcal{S}_0(Q)$   near  $\bm {f_0}$ are exactly of the form
\begin{equation}\label{eqform}
\begin{cases}
h(\zeta)= V-\zeta(I-\zeta X)^{-1}(I-X)V \\
\\
g_j(\zeta)={^t\overline{V}}A_j V-2^t \overline {V}A_j \zeta (I-\zeta X)^{-1}(I-X)V + \\
\hspace{1.4cm}^t \overline {V}(I-^t \overline {X})K_j(I + 2\zeta X(I-\zeta X)^{-1})(I-X) V +^t \overline {V}(^t\overline {X}K_j-K_jX))V\\
\end{cases}
\end{equation}
where  $V \in \Bbb C^n$ (close to $V_0$), $a \in \C^d$ is sufficiently small, $X$ is the unique $n\times n$ matrix solution of \eqref{confi0} (with $b=b_0- a-\overline{a}$) with $\|X\|<1$, and $K_j=\psi^{-1}(A_j)$ is Hermitian,  j=1, \dots,d.
\end{theo}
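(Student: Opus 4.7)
My plan is to decompose any lift $\bm{f}=(h,g,\tilde{h},\tilde{g}) \in \mathcal{S}_0(Q)$ into its components and identify each in turn. I would start by parametrizing the multiplier $c=(c_1,\ldots,c_d)$ defining the lift: since $\tilde g(\zeta)=\frac{\zeta}{2}c(\zeta)$ must be holomorphic on $\Delta$ while each $c_j$ is real-valued on $\partial\Delta$, a Fourier-series argument forces $c_j(\zeta)=a_j\bar\zeta+b_j+\bar a_j\zeta$ with $a_j\in\C$ and $b_j\in\R$. The normalization $\tilde g(1)=b_0/2$ yields $b=b_0-a-\bar a$, and proximity to $\bm{f_0}$ (at which $a=0$) makes $a\in\C^d$ small. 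Applying Lemma \ref{lemfact} then furnishes the unique matrix $X$ with $\|X\|<1$ solving \eqref{confi0} together with the factorization $\bar\zeta P+A+\zeta Q=(I-\bar\zeta\,\transp\overline{X})B(I-\zeta X)$ on $\partial\Delta$, where $P=\sum_ja_jA_j$, $Q=\transp{\overline{P}}=\sum_j\bar a_jA_j$, and $B$ is invertible Hermitian.

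Next I would determine $h$ via the cotangent condition $\tilde h(\zeta)=-\zeta\sum_jc_j(\zeta)\transp{A_j}\,\overline{h(\zeta)}$ on $\partial\Delta$. Transposing the identity of Lemma \ref{lemfact} rewrites this as $\tilde h(\zeta)=-(I-\zeta\transp{X})\transp{B}(\zeta I-\overline{X})\overline{h(\zeta)}$. Requiring $\tilde h$ to extend continuously to $\overline\Delta$ and holomorphically to $\Delta$ forces $(\zeta I-\overline{X})\overline{h(\zeta)}$ to have no negative Fourier modes---since $\transp{B}$ is invertible and $\|X\|<1$ rules out the non-contracting branch---which translates into $\overline{h_{k+1}}=\overline{X}\,\overline{h_k}$ for $k\geq 1$, equivalently $h_{k+1}=Xh_k$. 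Thus $h_k=X^{k-1}h_1$ for $k\geq 1$; the normalization $h(1)=0$ then forces $h_1=-(I-X)V$ with $V:=h_0\in\C^n$ free. Summing the geometric series yields $h(\zeta)=V-\zeta(I-\zeta X)^{-1}(I-X)V$, and a direct verification gives the polynomial identity $(\zeta I-\overline{X})\overline{h(\zeta)}=(\zeta-1)\overline{V}$ on $\partial\Delta$, so that $\tilde h$ reduces to the polynomial $(1-\zeta)(I-\zeta\transp{X})\transp{B}\,\overline{V}$.

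Finally I would determine $g$ from the attachment condition $g_j(\zeta)+\overline{g_j(\zeta)}=2\,\transp{\overline{h(\zeta)}}A_jh(\zeta)$ on $\partial\Delta$ together with $g_j(1)=0$. Inserting the explicit form of $h$ and matching Fourier coefficients, the central computational step is the identity $\sum_{k\geq 0}{\transp\overline{X}}^kA_jX^k=\psi^{-1}(A_j)=K_j$, obtained directly from the geometric-series formula for $\psi^{-1}$; this yields the $K_j$-dependent terms of $g_j$ displayed in \eqref{eqform}, where the factor $(I-\transp\overline{X})$ arises from the structure of $\overline{h_k}$ for $k\geq 1$. The imaginary part of the constant term $g_{j,0}$ is then fixed by $g_j(1)=0$: using the defining relation $A_j=K_j-\transp\overline{X}K_jX$, one identifies the required imaginary constant as $\transp{\overline{V}}(\transp\overline{X}K_j-K_jX)V$, which is purely imaginary precisely because $K_j$ is Hermitian. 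A reverse substitution then confirms that every pair $(a,V)$ as described produces a bona-fide lift in $\mathcal{S}_0(Q)$, so the correspondence is exhaustive. The subtlest step is this last bookkeeping: separating Hermitian from anti-Hermitian contributions in the constant term and exploiting $A_j=\psi(K_j)$ to reconcile the Fourier formula with the normalization $g_j(1)=0$.
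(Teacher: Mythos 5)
Your proposal is correct and follows essentially the same route as the paper: parametrizing the multiplier $c_j(\zeta)=a_j\bar\zeta+b_j+\bar a_j\zeta$ from holomorphy of $\tilde g$, invoking Lemma \ref{lemfact} to factor $\sum_j c_j(\zeta)A_j$, deducing the recursion $h_{k+1}=Xh_k$ from holomorphy of the lift component $\tilde h$, and recovering $g_j$ from the attachment condition together with the normalization $g_j(1)=0$ via $\psi(K_j)=A_j$. Your added verifications (the identity $(\zeta I-\overline{X})\overline{h(\zeta)}=(\zeta-1)\overline{V}$ and the check that $\transp{\overline{V}}(\transp\overline{X}K_j-K_jX)V$ is purely imaginary) are correct and only make explicit what the paper leaves as a straightforward computation.
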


\begin{proof}
Let $\bm{f}=(h,g,\tilde{h},\tilde{g}) \in \mathcal{S}_0(Q)$ be a lift of stationary disc. Consider $d$ real valued functions 
$c_1, \ldots, c_d : \partial \Delta \to \R$ such that $\sum_{j=1}^dc_j(\zeta)\partial \rho_j(0)\neq 0$ for all $\zeta \in \partial \Delta$ and such that the map 
$\zeta \mapsto \zeta \sum_{j=1}^dc_j(\zeta)\partial \rho_j(f(\zeta), \overline{f(\zeta)})$ defined on $\partial \Delta$ extends holomorphically on 
$\Delta$. It follows in particular that  each function $c_j$ is of the form 
\begin{equation*}
c_j(\zeta)= {a_j}\overline{\zeta}+b_j+\overline {a_j}\zeta,
 \end{equation*} 
 where $a_j \in \C$ and $\ b_j \in \R$. We set $a=(a_1,\ldots,a_d)$. So the lift components of  $\bm{f}$ are of the form   
 \begin{equation}\label{vac1}
\tilde{h}(\zeta)=-\zeta\transp\overline{h(\zeta)} \left({\sum_{j=1}^d ({a_j}\overline{\zeta}+b_j+\overline {a_j}\zeta)A_j}\right)
\end{equation}
and 
\begin{equation}\label{vac2}
\tilde{g}(\zeta)=\frac{a+b\zeta+\overline{a}\zeta^2}{2}
\end{equation}
with $b=b_0- a-\overline{a} \in \R^d$ to ensure that $\tilde{g}(1)=b_0/2$.

Consider now $a \in \C^d$ small enough and the corresponding solution $X$  of \eqref{confi0} with $\|X\|<1$.
Using  \eqref{vac1} and Lemma  \ref{lemfact}, we obtain,   by definition of the stationarity, that  the map
\begin{equation*}
\zeta \mapsto \zeta\transp\overline{h(\zeta)} ({I-\overline {\zeta}\  \transp{\overline {X}}})B({I- {\zeta}  { {X}}})
\end{equation*}
defined on $\partial \Delta$ extends holomorphically to the unit disc. Therefore the map
$$\zeta \mapsto\zeta({I- \overline{\zeta}\  \overline{X} })\overline{h(\zeta)}$$ extends holomorphically to the unit disc. Writing $h(\zeta)=\sum_{j=0}^\infty \alpha_j\zeta^j$, this implies directly that  $\alpha_j=X^{j-1}\alpha_1$ for  $j\geq 2$ and so 
the component $h$ is precisely of the form 
\begin{equation*}
h(\zeta)= h(0) + \zeta ({I- {\zeta}\  X })^{-1}h'(0).
\end{equation*}
Since $h(1)=0$ we obtain $h'(0)=-(I-X)h(0),$ and so the first part of \eqref{eqform} follows with $V=h(0)$.
The form of the component $g$ is obtained by using the fact that the disc is attached to the quadric Q and thus satisfies $\Re e g_j=\transp\overline{h}A_jh$ for $j=1,\ldots,d$. The computation is straightforward and leads to      
 \begin{eqnarray*}
 g_j(\zeta)&=& ^t \overline {V}A_j V-2^t \overline {V}A_j \zeta (I-\zeta X)^{-1}(I-X)V +\\
& &^t \overline {V}(I-{^t \overline {X}})K_j(I + 2\zeta X(I-\zeta X)^{-1})(I-X) V +iy_j  \\
\end{eqnarray*}
 with $y_j \in \R$ and $K_j=\psi^{-1}(A_j)$, where $\Psi$ is defined in \eqref{eqPsi}.
Finally,  since $g(1)=0$ and using the fact that $\psi(K_j)=A_j$, we obtain 
 $$iy_j={^t \overline {V}}(^t\overline {X}K_j-K_jX))V.$$ 
 This achieves the proof of the theorem.
\end{proof}

\begin{remark}\label{vic5}
The above theorem shows that, for a strongly Levi nondegenerate quadric given by \eqref{eqred1}, $\mathcal{S}_0(Q)$ is parametrized by $a \in \C^d$ and $V\in \C^n$ near $\bm {f_0}$, that is, $2n+2d=2N$ real parameters. That result was obtain via an implicit function theorem in \cite{be-bl-me} and explicitly in the case of strongly pseudoconvex quadric with generating Levi form form in \cite{tu}. Also, note that in case $a=(0,\ldots,0) \in \C^d$, one recovers the special family of lift given by \eqref{eqinit}. 
\end{remark}
In the above theorem, although it is important that the parameter $a \in \C^d$ is sufficiently small, no condition is need for the parameter $V \in \C^n$. We only require $V$ close to a given $V_0$ to make sure that the constructed family of lifts is in a
 neighborhood of the initial disc $\bm{f_0}$.

 \subsection{Nondefective stationary discs}

In what follows, we discuss the notion of defect of a stationary disc and its relation with Definition \ref{defstatmin}. Following \cite{ba-ro-tr}, a stationary disc $f$ is {\it defective} if it admits a lift 
$\bm{f}=(f,\tilde{f}): \Delta \to T^*\C^N$ such that  $\displaystyle 1/\zeta.\bm{f}=(f,\tilde{f}/\zeta)$ is holomorphic on $\Delta$. The discs is {\it nondefective} in case it is  not defective. 
For a quadric $Q\subset \C^N$ of the form \eqref{eqred1}, in view of \eqref{vac1} and \eqref{vac2}, a stationary disc $f=(h,g)$ for $Q$ is defective if 
there exists $c=(c_1,\ldots,c_d) \in \R^d\setminus\{0\}$ such that the map
 $$\zeta \mapsto c\partial_z \rho(f(\zeta))= \sum_{j=1}^d c_j \partial_z \rho_j(f(\zeta))=-\transp\overline{h(\zeta)}{\sum_{j=1}^d c_jA_j}$$ defined on $\partial \Delta$ extends holomorphically on $\Delta$. 
In \cite{be-me}, we observed that a stationary disc of the form
$$f=\left((1-\zeta)V,2(1-\zeta)\transp \overline{V}A_1V,\ldots,2(1-\zeta)\transp \overline{V}A_dV\right),$$ where $V\in \C^n$ and with lift of the special form \eqref{eqinit}, 
 is nondefective if and only if $Q$ is stationary minimal at $0$ for $(0,b,V)$ (see Lemma 3.3 in \cite{be-me}). In general, we have   

\begin{prop}\label{propstat} 
Let $Q$ be a strongly Levi nondegenerate quadric given by \eqref{eqred1} and  let $b_0 \in \R^d$ be such that $\sum_{j=1}^d {b_0}_jA_j$ is invertible. 
Consider a  stationary disc $f$ of the form \eqref{eqform} with lift $\in \mathcal{S}_0(Q)$.   
The following statements are equivalent:
\begin{enumerate}[i.]
\item  The disc $f$ is nondefective.  
\item The quadric $Q$ is stationary minimal at $0$ for $(a,b_0-a-\overline{a},h'(0))$.  
\item The quadric $Q$ is stationary minimal at $0$ for $(a,b_0-a-\overline{a},h'(1))$.
\item The quadric $Q$ is stationary minimal at $0$ for $(a,b_0-a-\overline{a},h(0))$.
\end{enumerate}
\end{prop}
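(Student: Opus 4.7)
The plan is to handle the easy equivalences (ii)$\Leftrightarrow$(iii)$\Leftrightarrow$(iv) first via Remark \ref{remtu}, then reduce the definition of defectivity to the algebraic condition appearing in Lemma \ref{lemstat}(ii) with $V$ replaced by $h'(0)$, which gives (i)$\Leftrightarrow$(ii).

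Differentiating the first line of \eqref{eqform} gives $h'(\zeta) = -(I-\zeta X)^{-2}(I-X)V$, so
$$h(0)=V,\qquad h'(0)=-(I-X)V,\qquad h'(1)=-(I-X)^{-1}V.$$
Stationary minimality for $(a, b_0-a-\overline a, W)$ depends only on $\spanc_\R\{X^k W:k\geq 0\}$, which is insensitive to multiplication of $W$ by a nonzero complex scalar. Hence the equivalences (ii)$\Leftrightarrow$(iii)$\Leftrightarrow$(iv) follow directly from Remark \ref{remtu} applied with $k=1$ and $k=-1$.

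For (i)$\Leftrightarrow$(ii), I would start from the defect criterion recalled immediately before the statement: $f$ is defective iff there exists $c \in \R^d\setminus\{0\}$ such that the map
$$\zeta\longmapsto -\transp\overline{h(\zeta)}\sum_{j=1}^d c_j A_j,$$
defined on $\partial\Delta$, extends holomorphically to $\Delta$. Plug in $h$ from \eqref{eqform}, take conjugate transpose, and use $\overline\zeta=1/\zeta$ on $\partial\Delta$ together with the Neumann expansion $(I-\overline\zeta\,\transp\overline X)^{-1}=\sum_{r\geq 0}\overline\zeta^{\,r}\transp\overline X^r$ (legal because $\|X\|<1$). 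The resulting expression decomposes as a holomorphic constant term plus
$$\transp\overline V\,(I-\transp\overline X)\sum_{r\geq 1}\zeta^{-r}\transp\overline X^{r-1}\sum_j c_jA_j,$$
so the extension condition amounts to the vanishing of every negative Fourier coefficient, i.e.\
$$\transp\overline V\,\transp\overline X^{r-1}(I-\transp\overline X)\sum_j c_jA_j=0,\qquad r\geq 1,$$
where I use that $I-\transp\overline X$ commutes with $\transp\overline X^{r-1}$. Taking conjugate transposes of this row-vector identity, exploiting that the $A_j$ are Hermitian, the $c_j$ real, and $X$ commutes with $I-X$, yields
$$\sum_j c_jA_j X^{s}(I-X)V=0,\qquad s\geq 0,$$
which is exactly $\sum_j c_j A_j X^s h'(0)=0$ for all $s\geq 0$ (up to an irrelevant sign). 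By Lemma \ref{lemstat}(ii), the existence of such a nonzero $c$ is the negation of stationary minimality at $0$ for $(a, b_0-a-\overline a, h'(0))$; equivalently, $f$ is nondefective iff (ii) holds, completing the proof.

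The main obstacle I foresee is the bookkeeping of conjugates and transposes in the Fourier-series computation, particularly invoking the commutation $X(I-X)=(I-X)X$ at the correct place (since $A_j$ and $X$ do not commute in general, the factor $I-X$ must be kept adjacent to $V$ so as to match the orbit $\mathcal{O}_{X,h'(0)}$ rather than some twisted variant). Beyond that, the argument is a direct reduction of defectivity to the algebraic condition characterized by Lemma \ref{lemstat}.
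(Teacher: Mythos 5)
Your proof is correct and follows essentially the same route as the paper: the equivalences among (ii)--(iv) via Remark \ref{remtu} and the identities $h(0)=V$, $h'(0)=-(I-X)V$, $h'(1)=-(I-X)^{-1}V$, and then the reduction of the defect criterion to the vanishing of the negative Fourier coefficients $\sum_j c_jA_jX^r(I-X)V$, matched against Lemma \ref{lemstat}(ii). The only (cosmetic) difference is that you run this as a single two-way chain of equivalences, whereas the paper splits it into the two implications (i)$\Rightarrow$(iv) and (ii)$\Rightarrow$(i); the underlying computation is identical.
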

\begin{proof}
The equivalence of the statements {\it ii.}, {\it iii.} and {\it iv.} follows from  Remark \ref{remtu} and the fact that $h'(0)=-(I-X)V$ and  $h'(1)=-(I-X)^{-1}V$and $h(0)=V$.  
We then prove that {\it  i.} implies {\it  iv.}. Assume that there exist $(\lambda_1,\ldots,\lambda_d) \in \R^d\setminus\{(0,0,\ldots,0)\}$ such that  $\sum_{j=1}^d\lambda_jA_jX^rV=0$ for all $r=0,1,2,\ldots$. We claim that the disc $f$ is defective since it admits it admits  a lift 
 $\bm{f}=(h,g,0,\zeta \lambda_1/2,\ldots,\zeta \lambda_d/2)$ such that $\displaystyle 1/\zeta.\bm{f}$ is holomorphic on $\Delta$. Indeed, we have 
 
 \begin{eqnarray*}
  \transp\overline{h(\zeta)}{\sum_{j=1}^d \lambda_jA_j} &=&   \underbrace{{\transp\overline{V}}\sum_{j=1}^d \lambda_jA_j}_{=0}-\overline{\zeta} {\transp\overline{V}}(I-\overline{\zeta} \transp\overline{X})^{-1}(I-\transp\overline{X})\sum_{j=1}^d \lambda_jA_j \\
  \\
    &=& -\overline{\zeta} \sum_{r=0}^\infty \overline{\zeta}^r\underbrace{\transp\overline{V}\transp\overline{X^r}\sum_{j=1}^d \lambda_jA_j}_{=0}+\overline{\zeta} \sum_{r=0}^\infty \overline{\zeta}^r\underbrace{\transp\overline{V}\transp\overline{X^{r+1}}\sum_{j=1}^d \lambda_jA_j}_{=0}=0.\\
  \end{eqnarray*}
We now prove that {\it ii.} implies {\it i.}. Assume that $f$ is defective. There exist $\lambda_1,\ldots,\lambda_d \in \R$ such that $\transp\overline{h(\zeta)}{\sum_{j=1}^d \lambda_jA_j}$ extends holomorphically on $\Delta$. Set $\tilde{V}=(I-X)V$. Since  \begin{eqnarray*}
 \transp\overline{h(\zeta)}{\sum_{j=1}^d \lambda_jA_j} &=& \transp\overline{V}\sum_{j=1}^d \lambda_jA_j- \sum_{r=0}^\infty \overline{\zeta}^{r+1}{\transp(\overline{X^{r}\tilde{V}})}\sum_{j=1}^d \lambda_jA_j \\
   \end{eqnarray*}
we have  ${\transp(\overline{X^{r}\tilde{V}})}\sum_{j=1}^d \lambda_jA_j=0$ for all $r=0,1,2,\ldots$ which shows that  $Q$ is not stationary minimal at $0$ for $(a,b_0-a-\overline{a},\tilde{V}).$ 
\end{proof}

\section{1-jet determination of stationary discs}
Let $Q$ be a strongly Levi nondegenerate quadric given by \eqref{eqred1}. 
 Consider the $1$-jet map $$\mathfrak j_{1}:\bm{f} \mapsto (\bm{f}(1),\bm{f}'(1))$$ at $\zeta=1$. We focus on lifts $\bm{f} \in \mathcal{S}_0(Q)$. Since $\bm{f}(1)=(0,0,0,b_0/2)$ where $b_0 \in \R^d$ is fixed, we identify the  $1$-jet map with the derivative map  
 $\bm{f} \mapsto \bm{f}'(1)$ at $\zeta=1$. This map may be expressed explicitly in view of  Theorem \ref{vic0} and Remark \ref{vic5}:
 \begin{prop}\label{propder}
In the context of Theorem \ref{vic0}, the  $1$-jet map 
 $$\mathfrak j_{1}: \C^d \times \C^n \to  \C^n \times \R^d \times \C^d$$ at $\zeta=1$ is given by 
\begin{equation*}
(a,V)\mapsto \bm{f} \mapsto  (h'(1), g'(1), \tilde {g}'(1))=
\left(-(I-X)^{-1}V,-2 \transp\overline{V}K_1V, \ldots, -2\transp\overline{V}K_dV,\dfrac{1}{2} ( b_0 -2i\Im m {a})\right).  
\end{equation*}
\end{prop}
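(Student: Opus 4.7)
The proposal is to prove this by direct computation of the derivative at $\zeta=1$ for each component of the lift, starting from the explicit formulas given in Theorem \ref{vic0} together with expression \eqref{vac2} for $\tilde{g}$ (with $b=b_0-a-\overline{a}$). The only nontrivial calculus input is the identity
\begin{equation*}
\frac{d}{d\zeta}\bigl[\zeta(I-\zeta X)^{-1}\bigr]=(I-\zeta X)^{-1}+\zeta(I-\zeta X)^{-1}X(I-\zeta X)^{-1}=(I-\zeta X)^{-2},
\end{equation*}
obtained after collecting terms over the common denominator $(I-\zeta X)^{-2}$ and using that $X$ commutes with $(I-\zeta X)^{-1}$.

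Applying this identity to $h(\zeta)=V-\zeta(I-\zeta X)^{-1}(I-X)V$ yields $h'(\zeta)=-(I-\zeta X)^{-2}(I-X)V$, and evaluation at $\zeta=1$ gives $h'(1)=-(I-X)^{-1}V$, which is the first component. For $\tilde{g}$, differentiating \eqref{vac2} directly gives $\tilde{g}'(\zeta)=(b+2\overline{a}\zeta)/2$, and substituting $b=b_0-a-\overline{a}$ at $\zeta=1$ produces $\tilde{g}'(1)=\tfrac12(b_0-2i\,\Imm a)$, the last component.

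The slightly less immediate step is the middle one, namely computing $g'_j(1)$. Differentiating the expression in \eqref{eqform} term by term and using the same identity for both $\zeta(I-\zeta X)^{-1}$ and for $\zeta X(I-\zeta X)^{-1}=X\cdot\zeta(I-\zeta X)^{-1}$ leads to
\begin{equation*}
g'_j(1)=-2\,\transp\overline{V}A_j(I-X)^{-1}V+2\,\transp\overline{V}(I-\transp\overline{X})K_jX(I-X)^{-1}V.
\end{equation*}
To conclude I would substitute the defining relation $A_j=\psi(K_j)=K_j-\transp\overline{X}K_jX$ in the first summand: the two terms involving $\transp\overline{X}K_jX$ cancel against each other, leaving
\begin{equation*}
g'_j(1)=-2\,\transp\overline{V}K_j(I-X)(I-X)^{-1}V=-2\,\transp\overline{V}K_jV,
\end{equation*}
which matches the stated formula.

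The main \emph{obstacle}, if any, is merely bookkeeping: one must carefully exploit commutativity of $X$ with $(I-\zeta X)^{-1}$ and use the identity $\psi(K_j)=A_j$ at the right moment in the simplification of $g'_j(1)$; otherwise the calculation is entirely routine and no new ingredient beyond Theorem \ref{vic0}, equation \eqref{vac2}, and the definition of $\psi$ in \eqref{eqPsi} is required.
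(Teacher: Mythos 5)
Your computation is correct and follows essentially the same route as the paper's proof: both rest on the identity $\frac{d}{d\zeta}\bigl[\zeta(I-\zeta X)^{-1}\bigr]\big|_{\zeta=1}=(I-X)^{-2}$, the commutation of $X$ with $(I-\zeta X)^{-1}$, and the substitution $A_j=\psi(K_j)=K_j-\transp\overline{X}K_jX$ to simplify $g_j'(1)$ to $-2\,\transp\overline{V}K_jV$. Nothing is missing.
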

Note that the component 
$$\tilde {h}'(1)=\transp\overline{h'(1)} \left(\sum_{j=1}^d (a_j+b_j+\overline{a_j})A_j\right)=\transp\overline{h'(1)} \left(\sum_{j=1}^d b_{0j}A_j\right)$$ is omitted since the information it carries is redundant due to the invertibility of the matrix
$\sum_{j=1}^d b_{0j}A_j$.
\begin{proof}[Proof of Proposition \ref{propder}]
We first note that the $1$-jet map of $\zeta(I-\zeta X)^{-1}$ at $\zeta=1$ is given by $(I-X)^{-2}$.  It follows directly that using the form of $h$
given by \eqref{eqform}, we obtain 
$$h'(1)= -(I-X)^{-1}V$$
and
$$g_j'(1)=2\transp\overline{V} (-A_j +K_jX -\transp\overline XK_j X)(I-X)^{-1} V=-2\transp\overline{V}K_jV$$
for $j=1,\ldots,d$. We also have, using the expression \eqref{vac2},
$$\tilde{g}'(1)=\dfrac{1}{2}(b_0+\overline a- a) .$$ This achieves the proof of the lemma.
\end{proof}
After performing changes of variables in both the source and the target spaces,
 the $1$-jet map $\mathfrak j_{1}$ from the previous corollary may be  written as 
\begin{equation}\label{riki1} 
\mathfrak j_{1}: (a,V) \mapsto 
(V, \transp\overline{V}(I-\transp\overline {X})K_1(I-X)V,\ldots, \transp\overline{V}(I-\transp\overline {X})K_d(I-X)V, \Im m a).
\end{equation} 
Due to the form of the differential map of $\mathfrak j_{1}$ at $(a,V) \in \C^d\times \C^n$ we obtain directly
\begin{cor}\label{cordif}
The $1$-jet map $\mathfrak j_{1}$ is a local diffeomorphism at $(a,V) \in \C^d\times \C^n$ if and only if 
the following $d\times d$ matrix 
$$\left(\dfrac{\partial}{\partial {\Re e a_s}}\transp\overline{V}(I-\transp\overline {X})K_j(I-X)V\right)_{j,s}$$ is invertible.
\end{cor}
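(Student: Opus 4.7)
The plan is to read off the invertibility of $d\mathfrak j_{1}$ directly from the block structure of its Jacobian matrix in the form \eqref{riki1}.

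I would choose real coordinates $(\Re e a, \Im m a, V) \in \R^d \times \R^d \times \C^n$ on the source and observe that, in the form \eqref{riki1}, the three blocks of output of $\mathfrak j_{1}$ depend very asymmetrically on the inputs: the $\C^n$-valued block is just $V$ itself, the final $\R^d$-valued block is just $\Im m a$, and only the middle $\R^d$-valued block $F(a,V) := \bigl(\transp\overline V (I-\transp\overline X) K_j (I-X) V\bigr)_{j=1,\dots,d}$ mixes the two source variables. Ordering the target coordinates as $(V, F, \Im m a)$, the Jacobian of $\mathfrak j_{1}$ at $(a,V)$ then takes the block form
\begin{equation*}
J = \begin{pmatrix} 0 & 0 & I_{2n} \\ \partial_{\Re e a} F & \partial_{\Im m a} F & \partial_V F \\ 0 & I_d & 0 \end{pmatrix}.
\end{equation*}

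Next I would compute $\det J$ by cofactor expansion along the first row block $(0,0,I_{2n})$: the $2n$ identity columns collapse the determinant, up to sign, to that of the $2d \times 2d$ matrix
\begin{equation*}
\begin{pmatrix} \partial_{\Re e a} F & \partial_{\Im m a} F \\ 0 & I_d \end{pmatrix},
\end{equation*}
which, being block upper triangular with an identity diagonal block, has determinant $\det(\partial_{\Re e a} F)$. Hence $J$ is invertible if and only if the $d \times d$ matrix $\bigl(\partial F_j / \partial \Re e a_s\bigr)_{j,s}$ is invertible, and the corollary follows from the inverse function theorem.

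The only genuine observation is the structural one that in \eqref{riki1} the variable $\Re e a$ enters only through $X$, and hence only through $F$; the remaining derivatives $\partial_{\Im m a} F$ and $\partial_V F$ are absorbed by the two identity blocks coming from the $V$- and $\Im m a$-outputs, so one never needs to compute them explicitly, nor to invoke Lemma \ref{lemX}. There is thus no serious obstacle in the proof.
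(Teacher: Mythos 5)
Your proof is correct and is exactly the linear-algebra verification that the paper leaves implicit when it says the corollary follows ``directly'' from the form \eqref{riki1} of the differential: the block Jacobian you write down, with the two identity blocks coming from the $V$- and $\Im m a$-outputs, reduces invertibility to that of $\bigl(\partial F_j/\partial\Re e\, a_s\bigr)_{j,s}$. The key structural point you isolate --- that $\Re e\, a$ enters \eqref{riki1} only through the middle block, while $\partial_{\Im m a}F$ and $\partial_V F$ are irrelevant --- is precisely what the authors are relying on.
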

In what follows, we investigate the invertibility of that matrix.
We denote  by $A_H$ the Hermitian part of a square matrix $A,$ namely 
$$A_H:=\dfrac{1}{2}(A+\transp\overline{A}).$$ Note that $\psi^{-1}(A_H)=(\psi^{-1}(A))_H$, where $\psi$ is defined by \eqref{eqPsi}.  
We need the following lemma.

\begin{lemma}\label{tortue}  Let $Q$ be a strongly Levi nondegenerate quadric given by \eqref{eqred1} and let $b_0 \in \R^d$ be such that $\sum_{j=1}^d {b_0}_jA_j$ is invertible.
 Let $a \in \C^d$ be small enough and let  X be the unique $n\times n$ matrix solution of \eqref{confi0} (with $b=b_0-a-\overline{a}$) such that $\|X\|<1$.
 Then for any $s=1,\ldots,d$, we have 
%

\begin{equation}\label{eqder}
\dfrac{\partial}{\partial {\Re e a_s}}(I-\transp\overline {X})K_j(I-X) =  -2\left(\left(I-\transp \overline {X}\right)^2\psi^{-1}(K_jX_{\Re e a_s})\right)_H.\\
\end{equation}

The term $\left(I-\transp \overline {X}\right)^2 \psi^{-1}(K_jX_{\Re e a_s})$ depends on $a$, and in particular  we have, for $a=0$,
$$\left(I-\transp \overline {X(0)}\right)^2 K_j(0)X_{\Re e a_s}(0)=-A_j\left(\sum_{k=1}^d {b_0}_kA_k \right)^{-1}A_s.$$ 

\end{lemma}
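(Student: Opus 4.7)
The plan is to differentiate the product $(I - \transp\overline{X})K_j(I-X)$ by the Leibniz rule and to use that $K_j=\psi^{-1}(A_j)$ satisfies the implicit equation $\psi(K_j)=K_j-\transp\overline{X}K_jX=A_j$ in order to express $\partial K_j/\partial \Re e a_s$ in a usable form. Write $Y:=X_{\Re e a_s}$, and note that since $\Re e a_s$ is real, $\partial_{\Re e a_s}(\transp\overline{X})=\transp\overline{Y}$. Differentiating $A_j=K_j-\transp\overline{X}K_jX$ and using that $A_j$ does not depend on $a$ yields
\begin{equation*}
\psi\bigl(\partial_{\Re e a_s}K_j\bigr)=\transp\overline{Y}\,K_j\,X+\transp\overline{X}\,K_j\,Y,
\end{equation*}
so $\partial_{\Re e a_s}K_j = \psi^{-1}(\transp\overline{Y}K_jX)+\psi^{-1}(\transp\overline{X}K_jY)$. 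Combining this with the product-rule expansion of the left-hand side of \eqref{eqder} gives an explicit expression in terms of $K_j$, $Y$, $X$, and $\psi^{-1}$.

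The key technical tool I will use is a pair of commutation identities for $\psi^{-1}$, namely $\transp\overline{X}\,\psi^{-1}(N)=\psi^{-1}(\transp\overline{X}N)$ and $\psi^{-1}(N)X=\psi^{-1}(NX)$, both of which follow by an index shift in the series $\psi^{-1}(N)=\sum_{r\geq 0}\transp\overline{X}^rNX^r$. Using these to absorb the factors of $\transp\overline{X}$ and $X$ into $\psi^{-1}$, the derivative splits cleanly into one piece involving only $Y$ and one involving only $\transp\overline{Y}$. A short telescoping in the series then shows that the $Y$-piece collapses to $-(I-\transp\overline{X})^2\psi^{-1}(K_jY)$ and, using $\psi^{-1}(\transp\overline{N})=\transp\overline{\psi^{-1}(N)}$ (visible again from the series), the $\transp\overline{Y}$-piece equals its conjugate transpose $-\psi^{-1}(\transp\overline{Y}K_j)(I-X)^2$. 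Summing yields $-2\bigl((I-\transp\overline{X})^2\psi^{-1}(K_jY)\bigr)_H$, which is precisely \eqref{eqder}.

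For the second assertion, I will simply evaluate at $a=0$: there $X(0)=0$, so $\psi|_{a=0}=\mathrm{id}$ and hence $K_j(0)=A_j$ and $(I-\transp\overline{X(0)})^2=I$; combining with Lemma~\ref{lemX}, which gives $X_{\Re e a_s}(0)=-\bigl(\sum_k b_{0k}A_k\bigr)^{-1}A_s$, one obtains $-A_j\bigl(\sum_k b_{0k}A_k\bigr)^{-1}A_s$ as claimed. The main obstacle is a purely bookkeeping one: recognizing the two commutation identities for $\psi^{-1}$ and lining up the correct index shifts so that the $Y$- and $\transp\overline{Y}$-contributions reassemble into Hermitian conjugates of one another. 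Once this observation is in hand, the computation is essentially mechanical.
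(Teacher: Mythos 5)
Your proposal is correct and follows essentially the same route as the paper: differentiate the implicit relation $\psi(K_j)=A_j$ to get $\psi(\partial_{\Re e a_s}K_j)=\transp\overline{X_{\Re e a_s}}K_jX+\transp\overline{X}K_jX_{\Re e a_s}$, use the commutation of $\psi^{-1}$ with left multiplication by $\transp\overline{X}$ and right multiplication by $X$, and reassemble via $\psi\bigl(\psi^{-1}(K_jX_{\Re e a_s})\bigr)=K_jX_{\Re e a_s}$ into the Hermitian part $-2\bigl((I-\transp\overline{X})^2\psi^{-1}(K_jX_{\Re e a_s})\bigr)_H$. The only difference is bookkeeping (you split into the $X_{\Re e a_s}$- and $\transp\overline{X_{\Re e a_s}}$-pieces and take conjugate transposes at the end, whereas the paper carries the Hermitian-part notation throughout), and the evaluation at $a=0$ is identical.
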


\begin{proof}
Since $\psi(K_j)=A_j$ , we have  
\begin{equation*}
\dfrac{\partial}{\partial {\Re e a_s}}(I-\transp\overline {X})K_j(I-X)=
\dfrac{\partial}{\partial {\Re e a_s}}(2K_j-K_jX -\transp\overline {X}K_j)=2\dfrac{\partial}{\partial {\Re e a_s}}(K_j-K_jX)_H
\end{equation*}
and
\begin{equation*}
\psi\left(\dfrac{\partial}{\partial {\Re e a_s}}K_j\right)=2\left({\transp \overline {X}}K_jX_{\Re e a_s}\right)_H.
\end{equation*}
Inverting $\psi$ leads to  
\begin{eqnarray*}
 \dfrac{\partial}{\partial {\Re e a_s}}K_j&=&2\psi^{-1}\left({\transp \overline {X}}K_jX_{\Re e a_s}\right)_H=2\left(\transp\overline {X}\underbrace{\psi^{-1}(K_j X_{\Re e a_s})}_{=:B}\right)_H.
 \end{eqnarray*}
We also have
\begin{equation*}
  \dfrac{\partial}{\partial {\Re e a_s}}K_jX= \left(\dfrac{\partial}{\partial {\Re e a_s}}K_j\right)X+K_jX_{\Re e a_s}.
\end{equation*}
It follows that
\begin{eqnarray*}
\dfrac{\partial}{\partial {\Re e a_s}}(I-\transp\overline {X})K_j(I-X)&=&2\dfrac{\partial}{\partial {\Re e a_s}}(K_j-K_jX)_H\\
\\
&=&2\left(2(\transp\overline {X}B)_H-2(\transp\overline {X}B)_HX-K_jX_{\Re e a_s}\right)_H\\
\\
&=&2\left(2\transp\overline {X}B-\transp\overline {B}X^2\underbrace{-\transp\overline {X}BX-K_jX_{\Re e a_s}}_{-B}\right)_H\\
\\
&=&2\left(2\transp\overline {X}B-\transp\overline {X^2}B-B\right)_H\\
\\
&=&-2\left((I-\transp\overline {X})^2B\right)_H.\\
\end{eqnarray*}
This concludes the proof of \eqref{eqder}. The proof of the second statement of Lemma  \ref{tortue} follows directly from Lemma \ref{lemX} and the fact that $\psi$ is the identity when $a=0.$  
\end{proof}

As a direct consequence, we obtain 
\begin{theo}\label{theogen}
Let $Q$ be a strongly Levi nondegenerate quadric given by \eqref{eqred1} and let $b_0 \in \R^d$ be such that $\sum_{j=1}^d {b_0}_jA_j$ is invertible. Then the $1$-jet map $\mathfrak j_{1}$ \eqref{riki1} is a local diffeomorphism at $(0,V)$ if and only if the $d \times d$ matrix 
\begin{equation*}
\Re e \left(\transp\overline{V}A_j\left(\sum_{k=1}^d {b_0}_kA_k \right)^{-1}A_sV\right)_{j,s}
\end{equation*} 
is invertible. In other words, the $1$-jet map $\mathfrak j_{1}$ is a local diffeomorphism at $(0,V)$ if and only if the quadric $Q$ is 
$\mathfrak{D}$-nondegenerate (with $V$). 
\end{theo}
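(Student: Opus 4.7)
The plan is to reduce the theorem to a direct computation, leveraging Corollary \ref{cordif} together with the explicit formula of Lemma \ref{tortue} evaluated at $a=0$. By Corollary \ref{cordif}, the $1$-jet map $\mathfrak{j}_1$ is a local diffeomorphism at $(0,V)$ precisely when the $d\times d$ matrix
\begin{equation*}
\mathcal{M}(V):=\left(\dfrac{\partial}{\partial \Re e a_s}\transp\overline{V}(I-\transp\overline{X})K_j(I-X)V\right)_{j,s}\Bigg|_{a=0}
\end{equation*}
is invertible. So the whole proof reduces to identifying $\mathcal{M}(V)$ with a scalar multiple of $\Re e \bigl(\transp\overline{V}A_j(\sum_k {b_0}_kA_k)^{-1}A_sV\bigr)_{j,s}$.

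First I would specialize Lemma \ref{tortue} to $a=0$. At $a=0$, we have $X=0$, so the map $\psi$ from \eqref{eqPsi} is simply the identity, and consequently $K_j(0)=A_j$ for every $j$. By Lemma \ref{lemX}, $X_{\Re e a_s}(0)=-\bigl(\sum_k {b_0}_kA_k\bigr)^{-1}A_s$. Plugging these into \eqref{eqder} and using the second statement of Lemma \ref{tortue} yields
\begin{equation*}
\dfrac{\partial}{\partial \Re e a_s}(I-\transp\overline{X})K_j(I-X)\bigg|_{a=0}
= -2\bigl(-A_j\bigl(\textstyle\sum_k {b_0}_kA_k\bigr)^{-1}A_s\bigr)_H
= 2\Bigl(A_j\bigl(\textstyle\sum_k {b_0}_kA_k\bigr)^{-1}A_s\Bigr)_H.
\end{equation*}

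Next I would multiply by $\transp\overline{V}$ on the left and $V$ on the right, and invoke the elementary identity $\transp\overline{V}N_HV=\Re e (\transp\overline{V}NV)$, valid for any square matrix $N$ and any complex vector $V$ (a quick check, since $\transp\overline{V}\transp\overline{N}V$ is the complex conjugate of the scalar $\transp\overline{V}NV$). This gives
\begin{equation*}
\mathcal{M}(V)_{j,s} = 2\Re e \Bigl(\transp\overline{V}A_j\bigl(\textstyle\sum_k {b_0}_kA_k\bigr)^{-1}A_sV\Bigr).
\end{equation*}
Since this matrix is invertible if and only if $\Re e \bigl(\transp\overline{V}A_j(\sum_k {b_0}_kA_k)^{-1}A_sV\bigr)_{j,s}$ is, Corollary \ref{cordif} yields the claimed equivalence. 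Finally, unpacking Definition \ref{definondeg}: the matrix $D_0$ with $j$-th column $A_jV$ satisfies $(\transp\overline{D_0}A^{-1}D_0)_{j,s}=\transp\overline{V}A_jA^{-1}A_sV$ (using $A_j=\transp\overline{A_j}$), so invertibility of $\Re e (\transp\overline{D_0}A^{-1}D_0)$ is exactly $\mathfrak{D}$-nondegeneracy with $V$. There is no real obstacle here beyond bookkeeping; the only subtlety is keeping track of the identification $\transp\overline{V}N_HV=\Re e (\transp\overline{V}NV)$, which bridges the Hermitian-part formulation of Lemma \ref{tortue} with the real-part formulation of Definition \ref{definondeg}.
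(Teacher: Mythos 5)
Your proposal is correct and follows essentially the same route as the paper: specialize Lemma \ref{tortue} at $a=0$ (where $X=0$, $\psi=\mathrm{id}$, $K_j=A_j$), apply the identity $\transp\overline{V}N_HV=\Re e(\transp\overline{V}NV)$, and conclude via Corollary \ref{cordif}. The only addition beyond the paper's own argument is your explicit unpacking of Definition \ref{definondeg}, which the paper leaves implicit.
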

\begin{proof}
According to Lemma \ref{tortue}, we have for any $s=1,\ldots,d$,
  \begin{equation*} 
 \dfrac{\partial}{\partial {\Re e a_s}}\left((I-\transp\overline {X})K_j(I-X)\right)(0)= 2  \left(A_j\left(\sum_{k=1}^d {b_0}_kA_k\right)^{-1}A_s\right)_H.
 \end{equation*}
Now note that for any $V \in \C^n$ and any $n\times n$ matrix, we have 
\begin{equation}\label{espoir0}
\transp \overline{V} A_H V=\Re e(\transp\overline{V} A V).
\end{equation} Thus  
\begin{equation*} 
 \dfrac{\partial}{\partial {\Re e a_s}}\transp \overline{V} (I-\transp\overline {X})K_j(I-X)V= 2 \Re e \left(\transp \overline{V}  A_j\left(\sum_{k=1}^d {b_0}_kA_k\right)^{-1}A_s V\right).
 \end{equation*}
\end{proof}
We want to emphasize that the $\mathfrak{D}$-nondegeneracy of $Q$ (see Definition \ref{definondeg}) is not a purely technical condition. In fact, it is  important to note that  
Theorem \ref{theogen} shows the geometric and adapted nature of this nondegeneracy condition. 
In general, it is important to find necessary and sufficient conditions (more trackable and geometric than the one given in Corollary  \ref{cordif}) to ensure that the  $1$-jet map $\mathfrak j_{1}$ is a local diffeomorphism. In the next theorem, we show that the stationary minimality of the quadric $Q$  is necessary. In a forthcoming paper, we will address and study the question of the sufficient condition. 
\begin{theo}\label{espoir}
Let $Q$ be a strongly Levi nondegenerate quadric given by \eqref{eqred1} and  let $b_0 \in \R^d$ be such that $\sum_{j=1}^d {b_0}_jA_j$ is invertible. Assume that  the $1$-jet map $\mathfrak j_{1}$ \eqref{riki1} is a local diffeomorphism at some $(a_0,V)$, for $a_0$ sufficiently small.   Then $Q$ is stationary minimal at $0$ for $(a_0,b_0-a_0-\overline{a_0},V)$.   
\end{theo}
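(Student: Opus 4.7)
The plan is to establish the contrapositive: assuming that $Q$ is not stationary minimal at $0$ for $(a_0, b_0 - a_0 - \overline{a_0}, V)$, I will show that the $1$-jet map $\mathfrak j_{1}$ fails to be a local diffeomorphism at $(a_0, V)$. By Proposition \ref{propstat} (the implication \textit{iv.} $\Rightarrow$ \textit{i.}), the negation of stationary minimality at this point means that the stationary disc $f$ parametrized by $(a_0, V)$ is defective, so by Lemma \ref{lemstat} there exists $\lambda = (\lambda_1, \dots, \lambda_d) \in \R^d \setminus \{0\}$ such that $\Lambda X^r V = 0$ for every integer $r \geq 0$, where $\Lambda := \sum_{j=1}^d \lambda_j A_j$ and $X := X(a_0)$.

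The aim is then to produce $\lambda$ as a non-trivial left null vector of the $d\times d$ Jacobian matrix $M$ of Corollary \ref{cordif} evaluated at $(a_0, V)$. Combining Lemma \ref{tortue} with the identity \eqref{espoir0}, its entries rewrite as
\[
M_{j,s} \;=\; -2\,\Re e\, \transp\overline V \,(I - \transp\overline X)^2\, \psi^{-1}(K_j X_{\Re e a_s})\, V,
\]
so that $\sum_{j} \lambda_j M_{j,s} = -2\,\Re e\, \transp\overline V (I - \transp\overline X)^2 \psi^{-1}(\tilde K X_{\Re e a_s}) V$ with $\tilde K := \sum_j \lambda_j K_j = \psi^{-1}(\Lambda)$. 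The decisive observation is that the defect condition propagates through $\psi^{-1}$: expanding $\tilde K = \sum_{r \geq 0} \transp\overline X^r \Lambda X^r$, the hypothesis $\Lambda X^r V = 0$ immediately yields $\tilde K X^s V = 0$ for every $s \geq 0$. Because $(I - X)^2$ is a polynomial in $X$, it commutes with every power of $X$, so $\tilde K X^r (I-X)^2 V = 0$ for all $r \geq 0$; taking conjugate transpose and using the Hermiticity of $\tilde K$ gives the row-vector identity $\transp\overline V (I - \transp\overline X)^2 \transp\overline X^r \tilde K = 0$.

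Expanding $\psi^{-1}(\tilde K X_{\Re e a_s}) = \sum_{r \geq 0} \transp\overline X^r \tilde K X_{\Re e a_s} X^r$ term by term then annihilates each summand of $\transp\overline V (I - \transp\overline X)^2 \psi^{-1}(\tilde K X_{\Re e a_s}) V$, regardless of the explicit form of $X_{\Re e a_s}$. Hence $\sum_j \lambda_j M_{j,s} = 0$ for every $s$, contradicting invertibility of $M$ and therefore (by Corollary \ref{cordif}) the local diffeomorphism assumption. I expect the main obstacle to lie in the matrix-algebra bookkeeping around $\psi^{-1}$: one has to carefully transport the vector identity $\tilde K X^r (I-X)^2 V = 0$ into a row-vector identity sitting on the \emph{left} of $X_{\Re e a_s}$, using the Hermiticity of $\tilde K$ and the commutation of $(I-X)^2$ with all powers of $X$, so that the explicit but much less transparent formula for $X_{\Re e a_s}$ obtained by differentiating \eqref{confi0} never enters the argument.
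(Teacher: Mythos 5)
Your proof is correct and follows essentially the same route as the paper: both reduce to exhibiting a nontrivial left null vector of the Jacobian matrix of Corollary \ref{cordif}, using the formula of Lemma \ref{tortue}, the identity \eqref{espoir0}, and the series expansion of $\psi^{-1}$. The only (harmless) difference is organizational --- the paper first replaces $V$ by $V'=(I-X)^2V$ via Remark \ref{remtu} and annihilates the resulting double sum column by column, whereas you keep $V$ and push the relation $\Lambda X^rV=0$ through $\tilde K=\psi^{-1}(\Lambda)$ using the Hermiticity of $\tilde K$ (your preliminary detour through defectiveness via Proposition \ref{propstat} is unnecessary, since Lemma \ref{lemstat} gives the vector $\lambda$ directly).
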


\begin{proof}
Assume that  the $1$-jet map $\mathfrak j_{1}$ is a local diffeomorphism at  $(a_0,V)$ and suppose by contradiction that $Q$ is not stationary minimal at $0$ for $(a_0,b_0-a_0-\overline{a_0},V)$. According to Remark \ref{remtu}, $Q$ is not stationary minimal at $0$ for $(a_0,b_0-a_0-\overline{a_0},V')$ where $V'=(I-X)^2V$, where  $X$  is the unique $n\times n$ matrix solution of \eqref{confi0} (with $b=b_0-a_0-\overline{a_0}$) such that $\|X\|<1$.
In particular, there exists  $W\in \Bbb R^d \setminus{\{0\}}$ such that for all integer $r \geq 0$, $D_rW=0$, where $D_r$ is the $n \times d$ matrix whose $j^{th}$ column is $A_jX^r V'$.
According to Lemma \ref{tortue} and \eqref{espoir0}, we may rewrite
$-\displaystyle \frac{1}{2}\dfrac{\partial}{\partial {\Re e a_s}}\transp \overline{V} (I-\transp\overline {X})K_j(I-X)V$ as follows
\begin{eqnarray*}
\transp \overline{V}\left(\sum_{r=0}^{\infty} {\transp \overline {X}}^r\left(I-\transp \overline {X}\right)^2 K_jX_{\Re e a_s}X^r\right)_HV 
&= &\Re e\left(\sum_{r=0}^{\infty} \transp\overline{V'}{\transp \overline {X}}^rK_jX_{\Re e a_s}X^r V\right)\\
\\
&= &\sum_{r=0}^{\infty}\sum_{l=0}^\infty\Re e\left(\transp\overline{V'}{\transp \overline {X}}^{r+\ell}A_jX^{\ell}X_{\Re e a_s}X^r V\right)\\
\end{eqnarray*}
It follows that $\transp W\left (\dfrac{\partial}{\partial {\Re e a_s}}\transp \overline{V} (I-\transp\overline {X})K_j(I-X)V  \right )_{j,s}$ only involves terms of the form $\transp W\transp\overline{D_{r+\ell}}$ or $\transp W\transp D_{r+\ell}$ and is thus equal to zero. According to Corollary \ref{cordif}, this is a contradiction.
\end{proof}

\section{Filling properties of stationary discs}
Let $Q$ be a strongly Levi nondegenerate quadric given by \eqref{eqred1}. 
In this section, we consider  the center evaluation map 
$$\Psi: \bm{f} \mapsto f(0)=(h(0), g(0)),$$
where $\bm{f} \in \mathcal{S}_0(Q)$.  We obtain immediately  from Theorem \ref{vic0} the following explicit expression of $\Psi$: 
\begin{prop}
In the context of Theorem \ref{vic0}, the center evaluation map 
$$\Psi:  \C^d \times \C^n \to  \C^n \times \C^d $$ at $\zeta=0$ is given by 
\begin{equation*} 
(a,V)  \mapsto \bm{f} \mapsto f(0)=(V, \transp\overline{V}2K_j(I-X)V).
\end{equation*} 
\end{prop}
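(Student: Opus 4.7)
The plan is to evaluate the explicit formulas (\ref{eqform}) from Theorem \ref{vic0} directly at $\zeta = 0$ and simplify using the single algebraic identity $\psi(K_j) = A_j$ which defines $K_j$. As indicated in the wording of the statement, no additional ingredient beyond Theorem \ref{vic0} is required; the proof is essentially bookkeeping.

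For the first coordinate, the explicit form $h(\zeta) = V - \zeta(I - \zeta X)^{-1}(I - X) V$ carries an explicit factor of $\zeta$ in its second summand, so $h(0) = V$ drops out immediately.

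For the $j$-th $g$-coordinate, substituting $\zeta = 0$ into (\ref{eqform}) leaves three contributions: the constant term $\transp\overline{V} A_j V$, the ``middle'' term $\transp\overline{V}(I - \transp\overline{X}) K_j (I - X) V$, and the ``tail'' term $\transp\overline{V}(\transp\overline{X} K_j - K_j X) V$. I would expand the middle factor as
$$(I - \transp\overline{X}) K_j (I - X) = K_j - K_j X - \transp\overline{X} K_j + \transp\overline{X} K_j X,$$
and then substitute $\transp\overline{X} K_j X = K_j - A_j$, which is exactly the defining equation $\psi(K_j) = A_j$. The two $A_j$ contributions then cancel, and the cross terms involving $\transp\overline{X} K_j$ and $K_j X$ cancel in pairs, leaving $2K_j - 2K_j X = 2K_j(I - X)$. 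This gives $g_j(0) = 2\transp\overline{V} K_j (I - X) V$, as asserted.

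The only potential obstacle is sign and coefficient bookkeeping in the cancellation of the six scalar terms; no nondegeneracy or analytic hypothesis is invoked beyond what is already built into the definition $K_j = \psi^{-1}(A_j)$ and the existence of $X$ with $\|X\| < 1$ from Lemma \ref{lemfact}. I would therefore present the proof as a one-line evaluation for $h$ followed by the short algebraic reduction for $g_j$ described above.
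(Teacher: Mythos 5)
Your proof is correct and is exactly the computation the paper leaves implicit (the paper asserts the proposition follows ``immediately'' from Theorem \ref{vic0} and supplies no argument). One small wording slip: after substituting $\transp\overline{X}K_jX = K_j - A_j$, only the $\transp\overline{X}K_j$ contributions cancel against each other, while the two $-K_jX$ contributions add up to $-2K_jX$; your final expression $g_j(0)=2\transp\overline{V}K_j(I-X)V$ is nevertheless the right one.
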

According to that explicit form, we have
\begin{cor}
The center evaluation map $\Psi$  is a local diffeomorphism at $(a,V) \in \C^d\times \C^n$ if and only if 
the following $d\times d$ matrix 
$$\left(\dfrac{\partial}{\partial {\Re e a_s}}\transp\overline{V}K_j(I-X)V\right)_{j,s}$$ is invertible.
\end{cor}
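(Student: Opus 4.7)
The plan is to mimic the strategy used for Corollary \ref{cordif}, i.e.\ to read the statement off from the explicit form of $\Psi$. Write $\Psi(a,V)=(V, G(a,V))$ with $G_j(a,V)=2\transp\overline V K_j(I-X)V$. In the real coordinates $(\Re e a, \Im m a, \Re e V, \Im m V)$ on the source and $(\Re e V, \Im m V, \Re e G, \Im m G)$ on the target, the Jacobian $d\Psi$ is block-triangular, with a $2n\times 2n$ identity block corresponding to the $V\mapsto V$ component. Hence $\Psi$ is a local diffeomorphism at $(a,V)$ if and only if the partial derivative $\partial_a G$, viewed as an $\R$-linear map $\R^{2d}\to\R^{2d}$, is invertible.

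The substantive step is then to identify this invertibility with that of the $d\times d$ complex matrix in the statement. At $a=0$ the argument is clean: Lemma \ref{lemX} gives $\partial X/\partial a_s\big|_{0}=0$, and since $\psi(K_j)=A_j$ with $X|_{0}=0$ one also gets $\partial K_j/\partial a_s\big|_{0}=0$; hence $\partial G_j/\partial a_s\big|_{0}=0$, so $\partial_a G\big|_{0}$ is $\C$-antilinear. A $\C$-antilinear map $\C^d\to\C^d$ is a real isomorphism exactly when its representing complex matrix is nonsingular, and that matrix is precisely $(\partial G_j/\partial\overline a_s)|_{0}=(\partial G_j/\partial\Re e a_s)|_{0}$, since the holomorphic part vanishes.

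To extend the argument to general small $a$, I would exploit the identity $\partial X/\partial a_s = -(\partial X/\partial \overline a_s)\,X$, obtained by differentiating \eqref{confi0} with respect to $a_s$ and $\overline a_s$ and using the commutation $X(I-X)=(I-X)X$. Combined with the relation $\psi(K_j)=A_j$, this identity links the holomorphic and antiholomorphic $a$-derivatives of $K_j(I-X)V$ via an invertible transformation depending on $X$. After contracting with $\transp\overline V$ and $V$, the resulting identity should translate the invertibility of the real $2d\times 2d$ Jacobian $\partial_a G$ into the invertibility of the complex $d\times d$ matrix $(\partial_{\Re e a_s}\transp\overline V K_j(I-X)V)_{j,s}$.

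I expect this last reduction at general $(a,V)$ to be the main obstacle. At $a=0$ the $\C$-antilinear structure makes the equivalence automatic, but for $a\neq 0$ the function $G$ is no longer antiholomorphic in $a$, and justifying the reduction requires a careful tracking of derivatives in the spirit of Lemma \ref{tortue}, showing that the $\Im m a_s$-derivatives of $K_j(I-X)V$ are determined by the $\Re e a_s$-derivatives modulo an invertible operation that does not alter the rank of the resulting matrix.
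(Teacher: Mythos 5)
Your opening reduction is exactly what the paper has in mind: since the $V$-component of $\Psi$ is the identity, $d\Psi$ is invertible at $(a,V)$ iff the real-linear map $\delta a\mapsto d_aG(\delta a)$, $\R^{2d}\to\C^d$, is invertible, where $G_j=2\transp\overline{V}K_j(I-X)V$. You are also right that the passage from this real $2d\times 2d$ condition to the invertibility of the complex $d\times d$ matrix $\bigl(\partial_{\Re e a_s}G_j\bigr)_{j,s}$ is the substantive point, and your treatment at $a=0$ is complete and correct: there $\partial X/\partial a_s=0$ and $\partial K_j/\partial a_s=0$, so $d_aG$ is $\C$-antilinear with representing matrix $\bigl(\partial G_j/\partial\overline a_s\bigr)=\bigl(\partial G_j/\partial\Re e a_s\bigr)$, and $v\mapsto N\overline v$ is a real isomorphism iff $N$ is nonsingular. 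This is all that is actually needed downstream, since Theorem \ref{theogen2} is stated only at $(0,V)$.

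For general small $a$, however, your argument is (as you acknowledge) not finished, and this is a genuine gap with respect to the statement as written, which claims the equivalence at every $(a,V)$. The identity $\partial X/\partial a_s=-(\partial X/\partial\overline a_s)X$ is correct: differentiating \eqref{confi0} in $a_s$ and in $\overline a_s$, multiplying the second relation on the right by $-X$ and subtracting, one finds that $Y:=X_{a_s}+X_{\overline a_s}X$ satisfies the homogeneous linearized equation $P(YX+XY)+AY=0$, which forces $Y=0$ for $a$ small; the ``commutation $X(I-X)=(I-X)X$'' you invoke is vacuous and plays no role. But for $a\neq0$ one has $\partial K_j/\partial a_s\neq0$ in general (from $\psi(\partial_{a_s}K_j)=\transp\overline{X_{\overline a_s}}K_jX+\transp\overline{X}K_jX_{a_s}$), so $G$ is no longer antiholomorphic in $a$, and the invertibility of the real matrix built from $M=(\partial G_j/\partial a_s)$ and $N=(\partial G_j/\partial\overline a_s)$ is not formally equivalent to that of $M+N$; a perturbation argument only yields the implication on a neighborhood of $a=0$ when $N(0)$ is invertible, not the stated ``iff'' at each fixed $(a,V)$. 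In fairness, the paper supplies no proof here either — the corollary is asserted to follow ``according to that explicit form'' — so the subtlety you isolated is real and is not resolved in the text; to close it you would need to carry out the reduction you sketch, i.e.\ show that the $\Im m a_s$-derivatives of $\transp\overline{V}K_j(I-X)V$ are obtained from the $\Re e a_s$-derivatives by an invertible operation at every small $a$, not just at $a=0$.
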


In the next theorem, we investigate the invertibility of that matrix in the case of a strongly Levi nondegenerate quadric for $a=0$ and any $V \in \C^n$. We have
\begin{theo}\label{theogen2}
Let $Q$ be a strongly Levi nondegenerate quadric given by \eqref{eqred1} and let $b_0 \in \R^d$ be such that $\sum_{j=1}^d {b_0}_jA_j$ is invertible. Then the center evaluation map  $\Psi$  is a local diffeomorphism at $(0,V)$ if and only if the $d \times d$ matrix 
\begin{equation*}
\left(\transp\overline{V}\transp\overline{A_j}\left(\sum_{k=1}^d {b_0}_kA_k \right)^{-1}A_sV\right)_{j,s}
\end{equation*} 
is invertible.
\end{theo}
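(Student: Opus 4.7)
The plan is to invoke the corollary immediately preceding the statement, which says that $\Psi$ is a local diffeomorphism at $(0,V)$ precisely when the $d\times d$ matrix $\bigl(\partial_{\Re e a_s}\transp\overline{V}K_j(I-X)V\bigr)_{j,s}$ is invertible at $a=0$. So the task reduces to computing each entry explicitly at $a=0$ and matching the result with the matrix displayed in the statement. Since $V$ does not depend on $a$, it suffices to compute the matrix derivative $\partial_{\Re e a_s}\bigl(K_j(I-X)\bigr)$ at $a=0$.

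First I record the ingredients at $a=0$. By Lemma \ref{lemX}, $X(0)=0$, so the operator $\psi$ of \eqref{eqPsi} reduces to the identity at $a=0$, and consequently $K_j(0)=\psi^{-1}(A_j)\big|_{a=0}=A_j$. The same lemma gives $X_{\Re e a_s}(0) = -\bigl(\sum_{k=1}^d {b_0}_k A_k\bigr)^{-1}A_s$. Now differentiating by the product rule,
$$\dfrac{\partial}{\partial {\Re e a_s}}\bigl(K_j(I-X)\bigr) = \Bigl(\dfrac{\partial K_j}{\partial {\Re e a_s}}\Bigr)(I-X) - K_j\, X_{\Re e a_s},$$
and invoking the formula $\partial_{\Re e a_s}K_j = 2\bigl(\transp\overline{X}\,\psi^{-1}(K_j X_{\Re e a_s})\bigr)_H$ derived in the proof of Lemma \ref{tortue}, which vanishes at $a=0$ because $X(0)=0$, I get
$$\dfrac{\partial}{\partial {\Re e a_s}}\bigl(K_j(I-X)\bigr)\bigg|_{a=0} = -A_j\, X_{\Re e a_s}(0) = A_j\Bigl(\sum_{k=1}^d {b_0}_k A_k\Bigr)^{-1} A_s.$$
Sandwiching between $\transp\overline{V}$ and $V$, and using that each $A_j$ is Hermitian so that $A_j = \transp\overline{A_j}$, produces exactly the matrix displayed in the statement, from which the equivalence follows.

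The point I wish to highlight, and the reason the answer differs from that of Theorem \ref{theogen}, is that the expression being differentiated here is $\transp\overline{V}K_j(I-X)V$ rather than $\transp\overline{V}(I-\transp\overline{X})K_j(I-X)V$. The missing left factor $(I-\transp\overline{X})$ is precisely what was responsible, via \eqref{eqder} in Lemma \ref{tortue}, for the Hermitian-part symmetrization appearing in the $1$-jet case. Since that factor is absent for the center evaluation, no real part is extracted, and the obstruction becomes the invertibility of the raw (non-symmetrized) matrix of quadratic forms $\transp\overline{V}\transp\overline{A_j}B_0^{-1}A_s V$, where $B_0 = \sum_k {b_0}_k A_k$. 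There is no real obstacle: the heavy lifting has already been carried out in Lemma \ref{tortue} and Lemma \ref{lemX}, and the argument amounts to specializing the computation of Lemma \ref{tortue} to the slightly different factor $K_j(I-X)$ at $a=0$.
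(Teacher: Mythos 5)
Your proposal is correct and follows essentially the same route as the paper: both reduce to the preceding corollary, apply the product rule to $K_j(I-X)$, use Lemma \ref{lemX} together with the formula for $\partial_{\Re e a_s}K_j$ from the proof of Lemma \ref{tortue} to see that this term vanishes at $a=0$, and conclude that the derivative equals $A_j\bigl(\sum_{k}{b_0}_kA_k\bigr)^{-1}A_s$, which matches the stated matrix since each $A_j$ is Hermitian. Your closing remark correctly identifies the absence of the left factor $(I-\transp\overline{X})$ as the reason no Hermitian symmetrization (and hence no real part) appears here, in contrast with Theorem \ref{theogen}.
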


It is remarkable that in Theorem \ref{theogen2}, the condition under which the center evaluation map is a local diffeomorphism at $(0,V)$ is precisely the  invertibility condition in ii. of Definition \ref{definondeg} of  full nondegeneracy. This illustrates the relevance of this notion of nondegeneracy and its relation with the geometric properties of stationary discs with lift of the form \eqref{eqinit}. 

\begin{proof}[Proof of Theorem \ref{theogen2}]
Using Lemma \ref{lemX} and the proof of Lemma \ref{tortue}, we obtain for any $j,s=1,\ldots,d$,
 \begin{eqnarray*}
  \dfrac{\partial}{\partial {\Re e a_s}}\left(K_j(I-X)\right)(0)&=&  \dfrac{\partial K_j}{\partial {\Re e a_s}}(0)-A_j\dfrac{\partial X}{\partial {\Re e a_s}}(0)\\
 \\
 & =& \underbrace{2\psi^{-1}\left({\transp \overline {X}}K_jX_{\Re e a_s}\right)_H(0)}_{=0}+A_j\left(\sum_{k=1}^d {b_0}_kA_k \right)^{-1}A_s \\
\end{eqnarray*}
and the proof follows.
\end{proof}

\vskip 1cm
{\small
\noindent Florian Bertrand\\
Department of Mathematics,\\
American University of Beirut, Beirut, Lebanon\\
{\sl E-mail address}: fb31@aub.edu.lb\\

\noindent Francine Meylan \\
Department of Mathematics\\
University of Fribourg, CH 1700 Perolles, Fribourg\\
{\sl E-mail address}: francine.meylan@unifr.ch\\
} 

\end{document}